\providecommand{\tabularnewline}{\\}
\theoremstyle{plain}
\newtheorem{thm}{\protect\theoremname}
\theoremstyle{plain}
\newtheorem{lem}[thm]{\protect\lemmaname}
\theoremstyle{plain}
\newtheorem{prop}[thm]{\protect\propositionname}
\theoremstyle{plain}
\newtheorem{cor}[thm]{\protect\corollaryname}
\newcommand{\blind}{1}
\def\U{\mbox{\boldmath$U$}}
\def\I{\mbox{\boldmath$I$}}
\def\zero{\mbox{\boldmath$0$}}
\def\one{\mbox{\boldmath$1$}}
\def\x{\mbox{\boldmath$x$}}
\def\y{\mbox{\boldmath$y$}}
\def\bbeta{\mbox{\boldmath$\beta$}}
\def\bPhi{\mbox{\boldmath$\Phi$}}
\def\btheta{\mbox{\boldmath$\theta$}}
\def\u{\mbox{\boldmath$u$}}
\def\bomega{\mbox{\boldmath$\omega$}}
\def\m{\mbox{\boldmath$m$}}
\def\mA{\mathcal{A}}
\def\mH{\mathcal{H}}
\def\mM{\mathcal{M}}
\def\seq#1#2{#1{:}#2}
\theoremstyle{definition}
\newtheorem{theorem}{Theorem}[section]
\newtheorem{remark}{Remark}[section]
\providecommand{\corollaryname}{Corollary}
\providecommand{\lemmaname}{Lemma}
\providecommand{\propositionname}{Proposition}
\providecommand{\theoremname}{Theorem}
\providecommand{\corollaryname}{Corollary}
\providecommand{\lemmaname}{Lemma}
\providecommand{\propositionname}{Proposition}
\providecommand{\theoremname}{Theorem}
\begin{document}
\global\long\def\spacingset#1{ \global\long
\global\long\def\baselinestretch{%
}%
\small\normalsize}%
\spacingset{1}


\if1\blind { 
\title{ Equivariant online predictions of non-stationary time series}
\author{Kōsaku Takanashi\thanks{RIKEN Center for Advanced Intelligence Project, Tokyo, Japan. {\scriptsize{}{}Email:
kosaku.takanashi@riken.jp } } \, \& Kenichiro McAlinn\thanks{Corresponding author. Department of Statistics, Operations, and Data
Science, Fox School of Business, Temple University, Philadelphia,
PA 19122. {\scriptsize{}{}Email: kenichiro.mcalinn@temple.edu} }}

\maketitle
} \fi

\if0\blind { \bigskip{}
 \bigskip{}
 \bigskip{}

\begin{center}
{\LARGE{}{}Equivariant online predictions of non-stationary time
series} 
\par\end{center}

\medskip{}
 } \fi

\bigskip{}

\begin{abstract}
We discuss the finite sample theoretical properties of online predictions
in non-stationary time series under model misspecification. To analyze the theoretical predictive
properties of statistical methods under this setting, we first define the Kullback-Leibler risk, in order to place the problem within a decision theoretic framework. Under this framework, we show that a specific class of dynamic
models-- random walk dynamic linear models-- produce exact minimax
predictive densities. We first show this result under Gaussian assumptions,
then relax this assumption using semi-martingale processes. This result
provides a theoretical baseline, under both non-stationary and stationary
time series data, for which other models can be compared against.
We extend the result to the synthesis of multiple predictive densities.
Three topical applications in epidemiology, climatology, and economics,
confirm and highlight our theoretical results. 
\end{abstract}
\noindent \textit{Keywords:} Bayesian analysis; Exact minimax; Model
misspecification; Time series; Ensemble methods. \vfill{}

\spacingset{1} 

\section{Introduction\label{sec:intro}}

Prediction and decision making using time series data are central
tasks for statistical methods in many fields, including economics,
climatology, and epidemiology. For these problems, the goal of the
statistical model is to use past data to characterize future data
and its uncertainty in an online, sequential manner. Predictions are then
used to inform and improve decision making. As these tasks are defined by their
temporality-- past events and actions affect future events-- the
success of a statistical model depends largely on its ability to capture
temporal characteristics. To respond, many models have been developed
and proposed, often categorized as dynamic models \citep[see, e.g.,][]{WestHarrison1997book2,Prado2010}.
Although the success of these models has been shown in practice,
theoretical investigations have been limited, particularly for non-stationary
data, despite its relevance in the domain.

There are several reasons why theoretical development in this area
has been difficult, although the biggest reason is inherent in the
data itself. As the data in time series tasks are, prima facie, non-stationary,
defined by the lack of stationary assumptions, most of the theoretical
apparatus used in statistics, which rely on assumptions, such as i.i.d.,
cannot be used. For example, in most time series data, we observe
gradual and sudden shifts in trend and volatility as time progresses, and a model or covariate
that works at one period often do not in another. This problem is compounded
when we consider decisions, as these decisions themselves can affect
future data (e.g. implementing strict mobility restrictions under
a pandemic will likely affect future infection numbers). To partially
get around this issue, many papers transform the data to appear stationary,
often by taking the log difference several times. This, however, is
often detrimental to the task itself, as these transformations do
not guarantee that the data are actually stationary and, more importantly,
they remove  the signals and patterns that are critical to understand
the data and make better predictions and decisions.

In developing theory for time series models, three points
must be considered for it to be relevant to real world problems.
First, given the non-stationary nature of the data, it cannot rely
on asymptotics, nor the conditions required for it. This is not only
required by the temporal characteristics of the data, but also reflects
the need of the decision maker, since forecasting and decision making
are always done with finite sample data and for finite horizons. For
example, many economic data are measured at the monthly frequency,
with only several decades in the past being relevant, at most (and
much less so under economic shocks). This would not be enough data
for the asymptotic results to be meaningful, even if the data can
be assumed to be i.i.d., which they cannot. Second, since decision
making is done under uncertainty, the theoretical results must be
with regard to the whole predictive distribution, and not just the
mean. Although the predictive literature has often focused on point metrics, such as mean squared forecast error, this is often insufficient
for decision making. For example, in finance, investment portfolios
are constructed by taking into account both the mean and covariance
of the assets, and it is widely recognized that the latter quantification
is critical for successful asset management. Third, all models must
be assumed to be misspecified, a setting often referred to as an $\mM$-open 
\citep[following][]{bernardo2009bayesian}. This point reflects
the reality that the ``best" model at some point can change given
time or even under some shock. It is simply unrealistic to assume
that there exists a true model in time series contexts. Given all
three points, thus, theoretical evaluations of statistical models
for non-stationary time series must be done in finite sample, with
regard to its distributional predictions, and where a true model cannot
be assumed to exist, for it to be meaningful and provide insight for
practice.

We contribute to this field by developing a theoretical framework
that satisfies all three criteria. Specifically, we have three contributions.
First, we define the Kullback-Leibler risk for predictions in non-stationary
time series data (Section~\ref{sec:KL}). This allows us to conduct
theoretical analyses of statistical models for non-stationary time
series data within a decision theoretic framework. Critical to this
framework, we neither assume that the true model is nested nor any
asymptotics. Second, using this decision theoretic framework, we show
that a specific class of dynamic models produces exact minimax predictive
distributions, under Gaussian assumptions (Section~\ref{subsec:maintheorem}).
This result provides finite sample predictive guarantees, making it
a benchmark to compare other models against. We then generalize this
result by relaxing the Gaussian assumption using semi-martingale processes,
providing theoretical guarantees under more general settings (Section~\ref{App:semimart}).
Third, we extend the above results to the problem of combining several
predictive distributions (i.e. ensemble methods), and show that dynamic
Bayesian predictive synthesis \citep[BPS:][]{mcalinn2019dynamic}
is exact minimax (Section~\ref{sec:bps}). We highlight the theoretical
results using three topical datasets: weekly average COVID-19 cases
in Tokyo, monthly global mean sea level, and a high-dimensional monthly
economic dataset (Section~\ref{sec:emst}). Through all three applications
(and simulation results in Appendix~\ref{sec:sim}), we show that
a method that achieves exact minimaxity is superior to methods that
do not.

\section{Preliminaries\label{sec:setup}}

\subsection{Data generating process}

For the theoretical analysis, we first assume that the data generating
process of both the target and covariates
follow Gaussian processes. While this assumption is somewhat restrictive--
even if it is reasonable enough for most applications--, this is
primarily done for ease of exposition, and to clarify the assumptions
we make. This assumption of Gaussianity is relaxed and generalized
using semi-martingale processes in Section~\ref{App:semimart}.

Let the $\mathbb{R}$-valued process to be predicted, $\left\{ y_{t}\right\} $,
and the covariate $\mathbb{R}$-valued processes (which can include
the lag of $y$), $\left\{ x_{j,t}\right\} $, be Gaussian processes.
Thus, any finite linear combination of elements from a set of random
variables, $\left\{ y_{s},x_{j,s};s=1,\cdots t,\ j\in J\right\} $,
is a Gaussian random variable. Let the mean and variance of $\left\{ y_{t}\right\} $
be $\mu_{t}^{y}$ and $\sigma_{t}^{y}$, respectively, and (the elements
of) the mean vector and covariance matrix of $\left\{ x_{j,t}\right\} $
be 
\[
\mu_{t}^{j}=\mathbb{E}\left[x_{j,t}\right],\ \sigma_{t}^{ij}=\mathbb{E}\left[\left(x_{j,t}-\mu_{t}^{j}\right)\left(x_{i,t}-\mu_{t}^{i}\right)\right].
\]

Let the $J+1$-dimensional multivariate normal distribution of $\left(y_{t},\boldsymbol{x}_{t}\right)$
be 
\[
\left[\begin{array}{c}
y_{t}\\
\boldsymbol{x}_{t}
\end{array}\right]\sim N\left(\left[\begin{array}{c}
\mu_{t}^{y}\\
\boldsymbol{\mu}_{t}
\end{array}\right],\left[\begin{array}{cc}
\sigma_{t}^{y} & \boldsymbol{\sigma}_{t}^{yJ\top}\\
\boldsymbol{\sigma}_{t}^{yJ} & \varSigma_{t}^{J}
\end{array}\right]\right),
\]
where $\boldsymbol{x}_{t}=\left[x_{1,t},\cdots,x_{J,t}\right]^{\top}$
$\boldsymbol{\mu}_{t}=\left[\mu_{t}^{1},\cdots,\mu_{t}^{J}\right]^{\top}$,
$\boldsymbol{\sigma}_{t}^{yJ}=\left[\sigma_{t}^{y1},\cdots,\sigma_{t}^{yJ}\right]^{\top}$,
$\sigma_{t}^{yj}=\mathbb{E}\left[\left(y_{t}-\mu_{t}^{y}\right)\left(x_{j,t}-\mu_{t}^{j}\right)\right]$,
and $\varSigma_{t}^{J}$, which is a $J\times J$ matrix with $\sigma_{t}^{ij}$
as the $i,j$-th element. Then, the conditional distribution, $p_{t}\left(y_{t}\left|\boldsymbol{x}_{t}\right.\right)$,
is, 
\[
p_{t}\left(y_{t}\left|\boldsymbol{x}_{t}\right.\right)=N\left(\mu_{t}+\left(\boldsymbol{\sigma}_{t}^{yJ}\right)^{\top}\left(\varSigma_{t}^{J}\right)^{-1}\left(x_{t}-\mu_{t}\right),\sigma_{t}^{y}-\left(\boldsymbol{\sigma}_{t}^{yJ}\right)^{\top}\left(\varSigma_{t}^{J}\right)^{-1}\boldsymbol{\sigma}_{t}^{yJ}\right),
\]
from the normal correlation theorem. If we write, 
\[
\mu_{t}+\left(\boldsymbol{\sigma}_{t}^{yJ}\right)^{\top}\left(\varSigma_{t}^{J}\right)^{-1}\left(\boldsymbol{x}_{t}-\mu_{t}\right)=\theta_{0,t+1}^{*}+\left\langle \boldsymbol{\theta}_{t+1}^{*},\boldsymbol{x}_{t+1}\right\rangle, 
\]
where $\boldsymbol{\theta}_{t+1}^{*}=\left[\theta_{1,t+1}^{*},\cdots,\theta_{J,t+1}^{*}\right]^{\top}$,
and $\sigma_{t+1}=\sigma_{t}^{y}-\left(\boldsymbol{\sigma}_{t}^{yJ}\right)^{\top}\left(\varSigma_{t}^{J}\right)^{-1}\boldsymbol{\sigma}_{t}^{yJ}$,
we have the likelihood function of the best model, 
\begin{align}
p_{t}\left(y_{t+1}\left|\boldsymbol{x}_{t}\right.\right) & =p_{t}\left(y_{t+1}\left|\theta_{0,t+1}^{*},\boldsymbol{\theta}_{t+1}^{*},\sigma_{t+1},\boldsymbol{x}_{t+1}\right.\right)\nonumber \\
 & =\frac{1}{\sqrt{2\pi\sigma_{t+1}^{2}}}\exp\left(-\frac{\left(y_{t+1}-\theta_{0,t+1}^{*}-\left\langle \boldsymbol{\theta}_{t+1}^{*},\boldsymbol{x}_{t+1}\right\rangle \right)^{2}}{2\sigma_{t+1}^{2}}\right).\label{eq:DGPconditional}
\end{align}

Given the above formulation, the unknown parameters are $\left(\theta_{0,t+1}^{*},\boldsymbol{\theta}_{t+1}^{*},\sigma_{t+1}\right)$.
By defining the best possible prediction, and identifying the unknown
parameters, we are able to measure and analyze the predictive risk
with regard to its Kullback-Leibler (KL) divergence. 
The specific definition of the KL loss and risk used in the statistical decision theory is given in Section~\ref{sec:KL}.

Note that our interest
is in the predictive ability of time series models, and not in the geometric
form of them. Because our interest is in the 1-step ahead forecasts,
the model space that defines the KL risk is solely determined by the
$J+2$ parameters, $\left(\theta_{0,t+1}^{*},\boldsymbol{\theta}_{t+1}^{*},\sigma_{t+1}\right)$,
and they need not be seen as a function of $t$.

\subsection{Equivariant model }

We consider an equivariant model for the decision problem. An equivariant
model is defined here as a model that produces predictive distributions
that are equivariant for the decision problem concerning
the one-step ahead predictive KL loss function. Specifically, the
equivariant model regarding the DGP (eq.~\ref{eq:DGPconditional})
is given as follows (the proof of its equivariance is given in \ref{subsec:Equivariance}):
\begin{subequations}
\label{DLM} 
\begin{align}
y_{t+1} & =\theta_{0,t+1}+\boldsymbol{x}_{t+1}^{\top}\boldsymbol{\theta}_{t+1}+\nu_{t+1},\quad\nu_{t+1}\sim N\left(0,v_{t+1}\right),\label{eq:DLMa}\\
\boldsymbol{\theta}_{t+1}^{\prime} & =\boldsymbol{\theta}_{t}^{\prime}+\boldsymbol{\omega}_{t+1},\quad\boldsymbol{\omega}_{t+1}\sim N\left(0,v_{t+1}\boldsymbol{W}_{t+1}\right),\label{eq:DLMb}\\
v_{t+1} & =\frac{\beta}{\gamma_{t+1}}v_{t},\quad\gamma_{t+1}\sim\textrm{Beta}\left(\frac{\beta n_{t}}{2},\frac{\left(1-\beta\right)n_{t}}{2}\right),\label{eq:DLMc}
\end{align}
\end{subequations}
 which is a random walk DLM, where $\boldsymbol{\theta}_{t+1}^{\prime}=\left[\theta_{0,t+1},\boldsymbol{\theta}_{t+1}\right]^{\top}$
evolves in time according to a linear, normal random walk with innovations
variance matrix, $v_{t+1}\boldsymbol{W}_{t+1}$, at time $t+1$, and
$v_{t+1}$ is the residual variance in predicting $y_{t+1}$ based
on past information and the set of covariates. The
residuals, $\nu_{t+1}$, and evolution innovations, $\boldsymbol{\omega}_{s+1}$,
are independent over time and mutually independent for all $t,s.$

Specifically, the likelihood function is 
\begin{align}
\hat{p}_{t}\left(y_{t+1}\left|\boldsymbol{\theta}_{t+1}^{\prime},v_{t+1},\boldsymbol{x}_{t+1}\right.\right) & =\frac{1}{\sqrt{2\pi v_{t+1}}}\exp\left(-\frac{\left(y_{t+1}-\left\langle \boldsymbol{\theta}_{t+1}^{\prime},\boldsymbol{x}_{t+1}^{\prime}\right\rangle \right)^{2}}{2v_{t+1}}\right),\label{eq:LikelihoodBPS}
\end{align}
where $\boldsymbol{x}_{t+1}^{\prime}=\left[1,\boldsymbol{x}_{t+1}\right]^{\top}$.
The predictive distribution conditioned on the covariates,
$\boldsymbol{x}_{t+1}^{\prime}$, are 
\begin{alignat}{1}
 & \hat{p}_t\left(y_{t+1}\left|\left\{ y_{s},\boldsymbol{x}_{s}\right\} _{s=1}^{t},\boldsymbol{x}_{t+1}\right.\right)\nonumber \\
= & \int_{\left(0,\infty\right)}\int_{\mathbb{R}^{J+1}}\hat{p}_{t}\left(y_{t+1}\left|\boldsymbol{\theta}_{t+1}^{\prime},\boldsymbol{x}_{t+1}^{\prime},v_{t+1}\right.\right)\pi\left(\boldsymbol{\theta}_{t+1}^{\prime},v_{t+1}\left|\left\{ y_{s},\boldsymbol{x}_{s}\right\} _{s=1}^{t}\right.\right)d\boldsymbol{\theta}_{t+1}^{\prime}dv_{t+1}.\label{eq:BPSconditional}
\end{alignat}
Here, 
\[
\pi\left(\boldsymbol{\theta}_{t+1}^{\prime},v_{t+1}\left|\left\{ y_{s},\boldsymbol{x}_{s}\right\} _{s=1}^{t}\right.\right)=\int_{\left(0,\infty\right)}\int_{\mathbb{R}^{J+1}}\pi\left(\boldsymbol{\theta}_{t+1}^{\prime},v_{t+1}\left|\boldsymbol{\theta}_{t}^{\prime},v_{t}\right.\right)\pi\left(\boldsymbol{\theta}_{t}^{\prime},v_{t}\left|\left\{ y_{s},\boldsymbol{x}_{s}\right\} _{s=1}^{t}\right.\right)d\boldsymbol{\theta}_{t}^{\prime}dv_{t},
\]
where $\pi\left(\boldsymbol{\theta}_{t}^{\prime},v_{t}\left|\left\{ y_{s},\boldsymbol{x}_{s}\right\} _{s=1}^{t}\right.\right)$
is the posterior distribution of $\left(\boldsymbol{\theta}_{t}^{\prime},v_{t}\right)$
at time $t$, after observing $y_{t}$ and $\boldsymbol{x}_{t+1}$.
At $t=0$, we denote the (initial) prior distribution of $\left(\boldsymbol{\theta}_{0}^{\prime},v_{0}\right)$
as $\rho\left(\boldsymbol{\theta}_{0}^{\prime},v_{0}\right)$. Under
this (initial) prior, the predictive distribution is denoted as $\hat{p}_{t}^{\rho}\left(y_{t+1}\left|\left\{ y_{s},\boldsymbol{x}_{s}\right\} _{s=1}^{t},\boldsymbol{x}_{t+1}\right.\right)$,
and the posterior distribution as $\pi^{\rho}\left(\boldsymbol{\theta}_{t}^{\prime},v_{t}\left|\left\{ y_{s},\boldsymbol{x}_{s}\right\} _{s=1}^{t}\right.\right)$,
both at time $t$.

Note that we can correspond eq.~\eqref{eq:DGPconditional} to eq.~\eqref{eq:LikelihoodBPS},
as follows: 
\begin{align*}
\theta_{0,t+1}^{*} & \leftrightarrow\theta_{0,t+1},\quad\boldsymbol{\theta}_{t+1}^{*}\leftrightarrow\boldsymbol{\theta}_{t+1},\\
z_{t+1} & \leftrightarrow\varepsilon_{t+1},\quad\sigma_{t+1}\leftrightarrow\sqrt{v_{t+1}}.
\end{align*}

In the next section, we show that the equivariant model also gives
a minimax predictive density.

\section{Finite sample predictive properties \label{sec:minmax}}

\subsection{A set up for the statistical decision problem\label{sec:KL}}

The goal is to show that the random walk DLM (eq.~\ref{DLM}), denoted
as $\hat{p}_{t}\left(y_{t+1}\left|\left\{ y_{s},\boldsymbol{x}_{s}\right\} _{s=1}^{t},\boldsymbol{x}_{t+1}\right.\right)$,
is equivariant and minimax in terms of the KL risk with regard to
the transition probability, $p_{t}\left(y_{t+1}\left|\left\{ y_{s},\boldsymbol{x}_{s}\right\} _{s=1}^{t},\theta_{0,t+1}^{*},\boldsymbol{\theta}_{t+1}^{*},\sigma_{t+1},\boldsymbol{x}_{t+1}\right.\right)$.
Thus, for each time the covariate processes, $\left\{ \boldsymbol{x}_{t}\right\} $,
are given, we can construct the predictive distribution, $\hat{p}_{t}\left(y_{t+1}\left|\left\{ y_{s},\boldsymbol{x}_{s}\right\} _{s=1}^{t},\boldsymbol{x}_{t+1}\right.\right)$,
in a path-wise manner. We will show that the random walk DLM is a
minimax estimate regarding the parameters, $\left(\theta_{0,t+1}^{*},\boldsymbol{\theta}_{t+1}^{*},\sigma_{t+1}\right)$,
of the best model, $p_{t}\left(y_{t+1}\left|\left\{ y_{s},\boldsymbol{x}_{s}\right\} _{s=1}^{t},\theta_{0,t+1}^{*},\boldsymbol{\theta}_{t+1}^{*},\sigma_{t+1},\boldsymbol{x}_{t+1}\right.\right)$.

We first define the invariant decision problem.
As the loss function, we employ KL divergence, and define it as follows:
\begin{alignat*}{1}
 & \mathsf{KL}\left(p_{t}\left(y_{t+1}\left|\left\{ y_{s},\boldsymbol{x}_{s}\right\} _{s=1}^{t},\theta_{0,t+1}^{*},\boldsymbol{\theta}_{t+1}^{*},\sigma_{t+1},\boldsymbol{x}_{t+1}\right.\right)\left|\hat{p}_{t}\left(y_{t+1}\left|\left\{ y_{s},\boldsymbol{x}_{s}\right\} _{s=1}^{t},\boldsymbol{x}_{t+1}\right.\right)\right.\right)\\
= & \int_{\mathbb{R}}p_{t}\left(y_{t+1}\left|\left\{ y_{s},\boldsymbol{x}_{s}\right\} _{s=1}^{t},\theta_{0,t+1}^{*},\boldsymbol{\theta}_{t+1}^{*},\sigma_{t+1},\boldsymbol{x}_{t+1}\right.\right)\log\frac{p_{t}\left(y_{t+1}\left|\left\{ y_{s},\boldsymbol{x}_{s}\right\} _{s=1}^{t},\theta_{0,t+1}^{*},\boldsymbol{\theta}_{t+1}^{*},\sigma_{t+1},\boldsymbol{x}_{t+1}\right.\right)}{\hat{p}_{t}\left(y_{t+1}\left|\left\{ y_{s},\boldsymbol{x}_{s}\right\} _{s=1}^{t},\boldsymbol{x}_{t+1}\right.\right)}dy_{t+1}.
\end{alignat*}
Then, the KL risk is 
\begin{alignat}{1}
 & R_{\mathsf{KL}}\left(\left(\theta_{0,t+1}^{*},\boldsymbol{\theta}_{t+1}^{*},\sigma_{t+1}\right),\hat{p}_{t}\right)=\label{eq:KLrisk}\\
 & \int_{\mathbb{R}^{\otimes t}}\mathsf{KL}\left(p_{t}\left(y_{t+1}\left|\left\{ y_{s},\boldsymbol{x}_{s}\right\} _{s=1}^{t},\theta_{0,t+1}^{*},\boldsymbol{\theta}_{t+1}^{*},\sigma_{t+1},\boldsymbol{x}_{t+1}\right.\right)\left|\hat{p}_{t}\left(y_{t+1}\left|\left\{ y_{s},\boldsymbol{x}_{s}\right\} _{s=1}^{t},\boldsymbol{x}_{t+1}\right.\right)\right.\right)d\mu_{Y}\left(y_{t},\cdots,y_{0}\right).\nonumber 
\end{alignat}
Here, the probability measure, $\mu_{Y}\left(y_{t},\cdots,y_{0}\right)$,
is a cylindrical measure of the process, $\left\{ y_{t}\right\} $.
The sample space is the  past observations, $\left\{ y_{s}\right\} _{s=1}^{t}$.
As a transformation to the sample space, $\left\{ y_{s}\right\} _{s=1}^{t}$,
we add the shift transform value, $-\left(\frac{1}{\sigma_{t+1}}\theta_{0,t+1}^{*}+\left\langle \frac{1}{\sigma_{t+1}}\boldsymbol{\theta}_{t+1}^{*},\boldsymbol{x}_{t+1}\right\rangle \right)$,
and multiply each scale by $\frac{1}{\sigma_{t+1}}$, $\left(\frac{1}{\sigma_{t+1}}>0\right)$, to obtain, 
\begin{alignat*}{1}
y_{1} & \rightarrow\tilde{y}_{1}=\frac{1}{\sigma_{t+1}}y_{1}-\left(\frac{1}{\sigma_{t+1}}\theta_{0,t+1}^{*}+\left\langle \frac{1}{\sigma_{t+1}}\boldsymbol{\theta}_{t+1}^{*},\boldsymbol{x}_{t+1}\right\rangle \right)\\
 & \vdots\\
y_{t} & \rightarrow\tilde{y}_{t}=\frac{1}{\sigma_{t+1}}y_{t}-\left(\frac{1}{\sigma_{t+1}}\theta_{0,t+1}^{*}+\left\langle \frac{1}{\sigma_{t+1}}\boldsymbol{\theta}_{t+1}^{*},\boldsymbol{x}_{t+1}\right\rangle \right).
\end{alignat*}
This scale-shift transformation can transform the sample to an arbitrary value on $\mathbb{R}$.
The parameter space is $\left(\theta_{0,t+1},\boldsymbol{\theta}_{t+1},v_{t+1}\right)\textrm{ or }\left(\theta_{0,t+1}^{*},\boldsymbol{\theta}_{t+1}^{*},\sigma_{t+1}\right)$.
As with the sample, to transform the parameter, we scale-shift
transform the parameters in the DLM model as, 
\begin{alignat*}{1}
\boldsymbol{\theta}_{t+1} & \rightarrow\widetilde{\boldsymbol{\theta}}_{t+1}=\frac{1}{\sigma_{t+1}}\boldsymbol{\theta}_{t}-\frac{1}{\sigma_{t+1}}\boldsymbol{\theta}_{t}^{*},\\
\theta_{0,t+1} & \rightarrow\widetilde{\theta}_{0,t+1}=\frac{1}{\sigma_{t+1}}\theta_{0,t+1}-\frac{1}{\sigma_{t+1}}\theta_{0,t+1}^{*},\\
v_{t+1} & \rightarrow\widetilde{v}_{t}=\frac{v_{t}}{\sigma_{t+1}^{2}},
\end{alignat*}
or the parameters in the DGP as,
\begin{alignat*}{1}
\boldsymbol{\theta}_{t+1}^{*} & \rightarrow\widetilde{\boldsymbol{\theta}}_{t+1}^{*}=0,\\
\theta_{0,t+1}^{*} & \rightarrow\widetilde{\theta}_{0,t+1}^{*}=0,\\
\sigma_{t+1}^{*} & \rightarrow\widetilde{\sigma}_{t}^{*}=1.
\end{alignat*}
The decision space is the probability density function, $p\left(y_{t+1}\right)$, of $y_{t+1}$.
For
each time $t+1$, we consider the scale-shift transform, 
\[
y_{t+1}\rightarrow\tilde{y}_{t+1}=\frac{1}{\sigma_{t+1}}y_{t+1}-\left(\frac{1}{\sigma_{t+1}}\theta_{0,t+1}^{*}+\left\langle \frac{1}{\sigma_{t+1}}\boldsymbol{\theta}_{t+1}^{*},\boldsymbol{x}_{t+1}\right\rangle \right),
\]
for the target variable, $y_{t+1}$. The transformation of the probability
density is denoted as 
\[
p\left(y_{t+1}\right)\rightarrow p\left(\tilde{y}_{t+1}\right).
\]

Further, since
\[
\widetilde{y}_{t+1}=\widetilde{\theta}_{0,t+1}^{*}+\left\langle \widetilde{\boldsymbol{\theta}}_{t+1}^{*},\boldsymbol{x}_{t+1}\right\rangle +\frac{1}{\sigma_{t+1}}\varepsilon_{t+1},\ \frac{1}{\sigma_{t+1}}\varepsilon_{t+1}\sim N\left(0,\frac{v_{t+1}}{\sigma_{t+1}^{2}}\right),
\]
we have,
\[
p_{t}\left(y_{t+1}\left|\left\{ y_{s},\boldsymbol{x}_{s}\right\} _{s=1}^{t},\tilde{\theta}_{0,t+1}^{*},\tilde{\boldsymbol{\theta}}_{t+1}^{*},\tilde{\sigma}_{t+1},\boldsymbol{x}_{t+1}\right.\right)=p_{t}\left(\tilde{y}_{t+1}\left|\left\{ y_{s},\boldsymbol{x}_{s}\right\} _{s=1}^{t},\theta_{0,t+1}^{*},\boldsymbol{\theta}_{t+1}^{*},\sigma_{t+1},\boldsymbol{x}_{t+1}\right.\right).
\]
Then, the KL, as a loss function, satisfies invariance: 
\begin{alignat*}{1}
 & \mathsf{KL}\left(p_{t}\left(y_{t+1}\left|\left\{ y_{s},\boldsymbol{x}_{s}\right\} _{s=1}^{t},\tilde{\theta}_{0,t+1}^{*},\tilde{\boldsymbol{\theta}}_{t+1}^{*},\tilde{\sigma}_{t+1},\boldsymbol{x}_{t+1}\right.\right)\left|\hat{p}_{t}\left(\tilde{y}_{t+1}\left|\left\{ y_{s},\boldsymbol{x}_{s}\right\} _{s=1}^{t},\boldsymbol{x}_{t+1}\right.\right)\right.\right)\\
= & \mathsf{KL}\left(p_{t}\left(y_{t+1}\left|\left\{ y_{s},\boldsymbol{x}_{s}\right\} _{s=1}^{t},\theta_{0,t+1}^{*},\boldsymbol{\theta}_{t+1}^{*},\sigma_{t+1},\boldsymbol{x}_{t+1}\right.\right)\left|\hat{p}_{t}\left(y_{t+1}\left|\left\{ y_{s},\boldsymbol{x}_{s}\right\} _{s=1}^{t},\boldsymbol{x}_{t+1}\right.\right)\right.\right).
\end{alignat*}
The data transformation, parameter transformation, and loss invariance define the invariant decision problem.

The predictive distribution, $\hat{p}_{t}$, that achieves 
\[
\hat{p}_{t}\left(\tilde{y}_{t+1}\left|\left\{ \tilde{y}_{s},\boldsymbol{x}_{s}\right\} _{s=1}^{t},\boldsymbol{x}_{t+1}\right.\right)=\hat{p}_{t}\left(y_{t+1}\left|\left\{ y_{s},\boldsymbol{x}_{s}\right\} _{s=1}^{t},\boldsymbol{x}_{t+1}\right.\right)
\]
is called an equivariant predictive distribution, and is minimax in terms of the KL risk, which minimizes the max risk:
\[
\max_{\theta_{0,t+1}^{*},\theta_{t+1}^{*},\sigma_{t+1}}R_{\mathsf{KL}}\left(\left(\theta_{0,t+1}^{*},\boldsymbol{\theta}_{t+1}^{*},\sigma_{t+1}\right),\hat{p}_{t}\right),\ \left(\textrm{for each }t\right).
\]
 This means that the sequentially updated
Bayes predictive distribution, $\hat{p}_{t}$, is sequentially minimax.
We will now show that the predictive distribution of eq.~\eqref{DLM}
is equivariant and minimax. 

The choice of criteria-- finite sample predictive minimax risk--
is done to reflect the need and reality of predictive tasks in non-stationary,
time series data. For example, considering the risk of the estimated
parameters, and not the predictive risk, is infeasible because the
parameters dynamically change as the data generating process itself
changes. Assuming strong conditions on the data generating process,
such that it is equivalent to i.i.d. data, defeats the purpose and
uninformative for the applications of interest. Further, any asymptotic
analysis will require similarly strong assumptions, even if the interest
is in predictive risk, making the analysis irrelevant. Evaluating
the predictive minimax risk against the best  predictive density
allows us to assess the predictive performance in finite sample, making
it possible to use it for non-stationary time series data and be relevant
to the task.

\subsection{Equivariance\label{subsec:Equivariance}}

As we will see later in Lemma~\ref{lemma:Invariant}, if $\left\{ \theta_{0,t},\boldsymbol{\theta}_{t},v_{t}\right\} $
is updated via a random walk, the transition distribution of the scale-shift
transformed $\left\{ \theta_{0,t},\boldsymbol{\theta}_{t},v_{t}\right\} $
is invariant. Thus, the statistical decision theoretic problem is
invariant with regard to the scale-shift transform. The predictive distribution
is given as 
\begin{alignat*}{1}
 & \hat{p}_{t}\left(\widetilde{y}_{t+1}\left|\left\{ \widetilde{y}_{s},\boldsymbol{x}_{s}\right\} _{s=1}^{t},\boldsymbol{x}_{t+1}\right.\right)\\
= & \int\int\hat{p}_{t}\left(\widetilde{y}_{t+1}\left|\widetilde{\boldsymbol{\theta}^{\prime}}_{t+1},\widetilde{v}_{t+1},\boldsymbol{x}_{t+1}\right.\right)\pi\left(\widetilde{\boldsymbol{\theta}^{\prime}}_{t+1},\widetilde{v}_{t+1}\left|\left\{ \widetilde{y}_{s},\boldsymbol{x}_{s}\right\} _{s=1}^{t}\right.\right)d\widetilde{\boldsymbol{\theta}^{\prime}}_{t+1}d\widetilde{v}_{t+1}.
\end{alignat*}
Here, $\left\{ \widetilde{\boldsymbol{\theta}}_{t-1},\cdots,\widetilde{\boldsymbol{\theta}}_{1}\right\} $ is $\left\{ \boldsymbol{\theta}_{t-1},\cdots,\boldsymbol{\theta}_{1}\right\} $
transformed by adding $-\frac{1}{\sigma_{t+1}}\boldsymbol{\theta}_{t}^{*}$
and multiplying $\frac{1}{\sigma_{t+1}}$.

Here, the prior distribution of $\boldsymbol{\theta}_{t}^{\prime}$,
at $t=0$, is a Lebesgue measure, $\mathsf{m}\left(\cdot\right)$,
the prior distribution of $v_{t}$ at $t=0$ is a scale-invariant
prior, $\frac{1}{v_{0}}$, and the state space evolves following a
random walk (eqs.~\ref{eq:DLMb} and \ref{eq:DLMc}). Then, the following
lemma holds. 
\begin{lem}
\label{lemma:Invariant} Consider a state space, $\boldsymbol{\theta}_{t}^{\prime},v_{t}$,
that is updated through a random walk (eqs.~\ref{eq:DLMb} and \ref{eq:DLMc})
evolution, where the prior distribution for $\boldsymbol{\theta}_{t}^{\prime}$
at $t=0$ is a Lebesgue measure, $\mathsf{m}\left(\cdot\right)$ and
$v_{t}$ at $t=0$ is a scale-invariant Jeffreys' prior $\frac{1}{v_{0}}$.
The state space, $\boldsymbol{\theta}_{t}^{\prime}$, is equivalent
in law with regard to the scale-shift transform, $\boldsymbol{\theta}_{t}^{\prime}\rightarrow\widetilde{\boldsymbol{\theta}^{\prime}}_{t}$,
\begin{eqnarray*}
\mu_{\theta}\left(\boldsymbol{\theta}_{0}^{\prime},\cdots,\boldsymbol{\theta}_{t}^{\prime}\right) & = & \mu_{\tilde{\theta}}\left(\widetilde{\boldsymbol{\theta}_{0}^{\prime}},\cdots,\widetilde{\boldsymbol{\theta}_{t}^{\prime}}\right),
\end{eqnarray*}
where each $\mu_{\theta}\left(\boldsymbol{\theta}_{0}^{\prime},\cdots,\boldsymbol{\theta}_{t}^{\prime}\right),\mu_{\tilde{\theta}}\left(\widetilde{\boldsymbol{\theta}_{0}^{\prime}},\cdots,\widetilde{\boldsymbol{\theta}_{t}^{\prime}}\right)$
is a cylindrical measure of state process $\left\{ \boldsymbol{\theta}_{t}^{\prime}\right\} $
and $\left\{ \widetilde{\boldsymbol{\theta}_{t}^{\prime}}\right\} $.
Similarly for, $v_{t}$, the cylindrical measure is invariant to the
scale transform, $v_{t}\rightarrow\frac{1}{\sigma_{t}}v_{t}=\tilde{v}_{t}$:
\begin{eqnarray*}
\mu_{v}\left(v_{0},\cdots,v_{t}\right) & = & \mu_{\tilde{v}}\left(\widetilde{v_{0}},\cdots,\widetilde{v_{t}}\right).
\end{eqnarray*}
\end{lem}

\begin{proof}
See Appendix~\ref{sec:lemMinmax}. 
\end{proof}
Since $\hat{p}_{t}\left(\widetilde{y}_{t+1}\left|\widetilde{\boldsymbol{\theta}^{\prime}}_{t+1},\widetilde{v}_{t+1},\boldsymbol{x}_{t+1}\right.\right)$
is given from eq.~\eqref{eq:LikelihoodBPS}, we have 
\[
\hat{p}_{t}\left(\widetilde{y}_{t+1}\left|\widetilde{\boldsymbol{\theta}^{\prime}}_{t+1},\widetilde{v}_{t+1},\boldsymbol{x}_{t+1}\right.\right)=\hat{p}_{t}\left(y_{t+1}\left|\boldsymbol{\theta}_{t+1}^{\prime},v_{t+1},\boldsymbol{x}_{t+1}\right.\right).
\]
Therefore, with Lemma~\ref{lemma:Invariant}, we have 
\[
\pi\left(\widetilde{\boldsymbol{\theta}^{\prime}}_{t+1},\widetilde{v}_{t+1}\left|\left\{ \widetilde{y}_{s},\boldsymbol{x}_{s}\right\} _{s=1}^{t}\right.\right)=\pi\left(\boldsymbol{\theta}_{t+1}^{\prime},v_{t+1}\left|\left\{ y_{s},\boldsymbol{x}_{s}\right\} _{s=1}^{t}\right.\right)
\]
and, as a result, can see that this is equivariant: 
\[
\hat{p}_{t}\left(\widetilde{y}_{t+1}\left|\left\{ \widetilde{y}_{s},\boldsymbol{x}_{s}\right\} _{s=1}^{t},\boldsymbol{x}_{t+1}\right.\right)=\hat{p}_{t}\left(y_{t+1}\left|\left\{ y_{s},\boldsymbol{x}_{s}\right\} _{s=1}^{t},\boldsymbol{x}_{t+1}\right.\right).
\]

\subsection{Exact minimaxity\label{subsec:maintheorem}}

Next, we will show that the equivariance predictive distribution,
$\hat{p}_{t}\left(y_{t+1}\left|\left\{ y_{s},\boldsymbol{x}_{s}\right\} _{s=1}^{t},\boldsymbol{x}_{t+1}\right.\right)$,
is minimax in finite sample.

Given the KL risk, we have the following theorem: \begin{theorem}\label{thm:Minimax}

\textit{Let the process to be predicted, $\left\{ y_{t}\right\} $,
and the covariate processes, $\left\{ x_{j,t}\right\} $, be Gaussian
processes. Then, the random walk DLM predictive distribution, $\hat{p}_{t}^{\frac{1}{v_{0}}\mathsf{m}}\left(y_{t+1}\left|\left\{ y_{s},\boldsymbol{x}_{s}\right\} _{s=1}^{t},\boldsymbol{x}_{t+1}\right.\right)$,
with Lebesgue measure prior, $\mathsf{m}$, and Jefferys' prior, $\frac{1}{v_{0}}$,
for the initial prior ($t=0$), is exact minimax with regard to the
Kullback-Leibler risk (eq.~\ref{eq:KLrisk}): 
\[
R_{\mathsf{KL}}\left(\left(\theta_{0,t+1}^{*},\boldsymbol{\theta}_{t+1}^{*},\sigma_{t+1}\right),\hat{p}_{t}^{\frac{1}{v_{0}}\mathsf{m}}\right)=\min_{\hat{p}_{t}}\max_{\theta_{0,t+1}^{*},\boldsymbol{\theta}_{t+1}^{*},\sigma_{t+1}}R_{\mathsf{KL}}\left(\left(\theta_{0,t+1}^{*},\boldsymbol{\theta}_{t+1}^{*},\sigma_{t+1}\right),\hat{p}_{t}\right).
\]
} \end{theorem}

The full proof is given in Appendix~\ref{sec:Proof-of-Minimax}. The general strategy of the proof for Theorem.~\ref{thm:Minimax}
is done by proving the following two conditions, according to the
equalizer rule \citep[Chapter 5.3.2.]{berger1985statistical}, following
Corollary 1. of \citet{george2006improved}: 
\begin{enumerate}
\item The KL risk of $\hat{p}_{t}^{\frac{1}{v_{0}}\mathsf{m}}\left(y_{t+1}\left|\left\{ y_{s},\boldsymbol{x}_{s}\right\} _{s=1}^{t},\boldsymbol{x}_{t+1}\right.\right)$
is constant for all sets of parameters, $\left(\theta_{0,t+1}^{*},\boldsymbol{\theta}_{t+1}^{*},\sigma_{t+1}\right)$:
\[
R_{\mathsf{KL}}\left(\left(\theta_{0,t+1}^{*},\boldsymbol{\theta}_{t+1}^{*},\sigma_{t+1}\right),\hat{p}_{t}^{\frac{1}{v_{0}}\mathsf{m}}\right)=c,\ ^{\forall}\left(\theta_{0,t+1}^{*},\boldsymbol{\theta}_{t+1}^{*},\sigma_{t+1}\right).
\]
\item $\hat{p}_{t}^{\frac{1}{v_{0}}\mathsf{m}}\left(y_{t+1}\left|\left\{ y_{s},\boldsymbol{x}_{s}\right\} _{s=1}^{t},\boldsymbol{x}_{t+1}\right.\right)$
is extended Bayes. In other words, the Bayes risk is approximated
by the prior sequence, $\left\{ \rho_{k}\right\} $, where $\sqrt{2\pi\left|\sigma_{k}W_{0}\right|}\rho_{k}\left(\boldsymbol{\theta}_{0}^{\prime},v_{0}\right)\rightarrow\frac{1}{v_{0}}\mathsf{m}\left(\boldsymbol{\theta}_{0}^{\prime}\right)$
: 
\[
\lim_{k\rightarrow\infty}B\left(\rho_{k},\hat{p}_{t}^{\rho_{k}}\right)=B\left(\rho_{k},\hat{p}_{t}^{\frac{1}{v_{0}}\mathsf{m}}\right).
\]
\end{enumerate}
The Bayes risk is defined as 
\[
B\left(\rho_{k},\hat{p}_{t}^{\rho_{k}}\right)=\int R_{\mathsf{KL}}\left(\left(\theta_{0,t+1}^{*},\boldsymbol{\theta}_{t+1}^{*},\sigma_{t+1}\right),\hat{p}_{t}^{\rho_{k}}\right)\rho_{k}\left(\theta_{0,t+1}^{*},\boldsymbol{\theta}_{t+1}^{*},\sigma_{t+1}\right),
\]
which is written with regard to the posterior parameters, $\left(\theta_{0,t+1}^{*},\boldsymbol{\theta}_{t+1}^{*},\sigma_{t+1}\right)$,
and not the prior on $\boldsymbol{\theta}_{0}^{\prime},\sigma_{0}$.
However, this Bayes risk is effectively equivalent to the prior specification
of $\boldsymbol{\theta}_{0}^{\prime},\sigma_{0}$, due to recursive
updating. More specifically, the posterior distribution, $\pi\left(\boldsymbol{\theta}_{t}^{\prime},\sigma_{t}\left|\left\{ y_{s},\boldsymbol{x}_{s}\right\} _{s=1}^{t}\right.\right)$,
is given by the generalized Bayes formula for filtering (see Appendix~C
for notation), 
\[
\pi\left(\boldsymbol{\theta}_{t}^{\prime},\sigma_{t}\left|\left\{ y_{s},\boldsymbol{x}_{s}\right\} _{s=1}^{t}\right.\right)=\frac{\int_{\left(\mathbb{R}^{J}\right)^{t-1}}\beta_{t}\left(\boldsymbol{\theta}^{\prime},\boldsymbol{\sigma},y\right)d\mu_{\theta,\sigma}^{\rho}\left(\left(\boldsymbol{\theta}_{0}^{\prime},\sigma_{0}\right),\cdots,\left(\boldsymbol{\theta}_{t-1}^{\prime},\sigma_{t-1}\right)\right)}{\int_{\left(\mathbb{R}^{J}\right)^{t}}\beta_{t}\left(\boldsymbol{\theta}^{\prime},\boldsymbol{\sigma},y\right)d\mu_{\theta,\sigma}^{\rho}\left(\left(\boldsymbol{\theta}_{0}^{\prime},\sigma_{0}\right),\cdots,\left(\boldsymbol{\theta}_{t}^{\prime},\sigma_{t}\right)\right)},
\]
though because we assume a random walk (eq.~\ref{eq:RWstate}) for
the process, $\left\{ \boldsymbol{\theta}_{t}^{\prime},\sigma_{t}\right\} $,
the cylindrical measure, $\mu_{\theta,\sigma}^{\rho}\left(\left(\boldsymbol{\theta}_{0}^{\prime},\sigma_{0}\right),\cdots,\left(\boldsymbol{\theta}_{t}^{\prime},\sigma_{t}\right)\right)$,
is determined by the prior distribution of $\boldsymbol{\theta}_{0}^{\prime},\sigma_{0}$:
$\rho\left(\boldsymbol{\theta}_{0}^{\prime}\right),\rho\left(\sigma_{0}\right)$.

\subsection{Discussion}

Since the predictive distribution of the random walk DLM, $\hat{p}_{t}^{\frac{1}{v_{0}}\mathsf{m}}$,
is risk constant (Appendix~\ref{subsec:risk const}) and extended
Bayes (Appendix~\ref{subsec:extend}), from \citet{Liang-Barron_04}
Theorem 1., it is exact minimax with regard to the KL risk. This result
can be interpreted as follows: under the least favorable prior distribution,
the Bayes solution is minimax. However, if the
state stochastic process, $\left\{ \boldsymbol{\theta}_{t}^{\prime}\right\} $,
is stationary, even if the initial prior distribution is a Lebesgue
measure, the updated posterior distribution converges in law to a
proper probability measure, due to the individual ergodic theorem.
Under this setting, the predictive distribution cannot be minimax,
since it is not shift invariant. Therefore, for minimaxity, the state
stochastic process, $\left\{ \boldsymbol{\theta}_{t}^{\prime}\right\} $,
must not be stationary.

Minimaxity is useful as a theoretical benchmark because it examines the predictive under no prior information, a situation that both Bayesians and frequentists can agree to be relevant.
In a situation where someone has prior information that informs their model or prior, that model or prior should, at the very least, be equal to or improve over the minimax predictive distribution.
This is why, for non-stationary time series data, the minimax model is the theoretical benchmark for predictive performance.
In other words, a statistical model should at least achieve minimaxity for it to be considered worthwhile, particularly when a minimax model is known to exist.

\section{Extension using semi-martingale processes\label{App:semimart}}

While the theorem in Section~\ref{sec:minmax} assumes that the data
generating process and covariates follow Gaussian processes,
this assumption can be relaxed using semi-martingale processes.

We assume that the predictive process, $y_{t}$, and the covariate
process, $x_{j,t}$, $j\in J$, are both square integralable processes
and filtration $\mathcal{F}_{t}$-adapted ($\mathcal{F}_{t}=\sigma\left(\left\{ y_{s},\boldsymbol{x}_{s}\right\} _{s=1}^{t}\right)$).
Then, both $y_{t}$, $x_{j,t}$, $j\in J$ are uniquely decomposed,
from the Doob decomposition, as the following: 
\begin{align*}
y_{t}=y_{0}+M_{t}+V_{t}, & \quad M_{0}=V_{0}=0\\
x_{j,t}=x_{j,0}+M_{j,t}+V_{j,t}, & \quad M_{j,0}=V_{j,0}=0,\ j\in J
\end{align*}
where $y_{0},$ $\left\{ x_{j,0}\right\} _{j\in J}$ is a $\mathcal{F}_{0}$-measurable
stochastic variable, $M,$ $\left\{ M_{j}\right\} _{j\in J}$ is a
square-integrable martingale, and $V,$ $\left\{ V_{j}\right\} _{j\in J}$
is square-integrable and predictable, thus $\mathcal{F}_{t-1}$-adapted.
Further, the optimal approximate process of $y_{t+1}$, utilizing
the covariate process, $x_{j,t+1},j\in J$, can be represented as
\citep[F\"{o}llmer-Schweizer (FS) decomposition]{schweizer1995minimal}:
\begin{equation}
y_{t+1} =\theta_{y_{t+1}}^{*}+\sum_{s=1}^{t}\left\langle \boldsymbol{\theta}_{s+1}^{*},\varDelta\boldsymbol{x}_{s}\right\rangle +z_{t+1},\label{eq:FSmin}
\end{equation}
where $\left(\theta_{y_{t+1}}^{*},\left\{ \boldsymbol{\theta}_{s}^{*}\right\} _{1\leqq s\leqq t+1}\right)$
are the true parameters of the FS decomposition (due to $\boldsymbol{\theta}_{t+1}^{*}$
being determined at $t$) and $\varDelta\boldsymbol{x}_{s}=\boldsymbol{x}_{s+1}-\boldsymbol{x}_{s}$.
When $\left\{ \theta_{y_{t+1}}^{*}\right\} $ is constant, but $\left\{ \boldsymbol{\theta}_{s}^{*}\right\} _{1\leqq s\leqq t+1}$
vary, then $z_{t+1}$ is a martingale process, which is orthogonal
to the martingale term of $\boldsymbol{x}_{t}$. While it might seem
like an issue that the difference, $\varDelta x$, appears as a covariate
in the linear model, it is done to make the cross term orthogonal.
Using this expression, we can rewrite the best predictive model of
$y_{t+1}$ at $t$ as 
\[
y_{t+1}=y_{t}+\theta_{y_{t+1}}^{*}-\theta_{y_{t}}^{*}+\left\langle \boldsymbol{\theta}_{t+1}^{*},\varDelta\boldsymbol{x}_{t}\right\rangle +z_{t+1}-z_{t}.
\]
This can be seen by subtracting the FS decomposition at $t$, $y_{t}=\theta_{y_{t}}^{*}+\sum_{s=1}^{t-1}\left\langle \boldsymbol{\theta}_{s+1}^{*},\varDelta\boldsymbol{x}_{s}\right\rangle +z_{t}$,
from eq.~\eqref{eq:FSmin}.

\begin{remark} \textit{A martingale process, $z_{t}$, is orthogonal
to $\boldsymbol{M}_{t}$ (the martingale component of $\boldsymbol{x}_{t}$)
when $\boldsymbol{M}_{t}z_{t}$ is a martingale. Here, $\varDelta\boldsymbol{M}_{t}\varDelta z_{t}$,
$\varDelta\boldsymbol{M}_{t}z_{t}$, and $\boldsymbol{M}_{t}\varDelta z_{t}$
all have expectation zero.}

\end{remark}

\begin{remark} \textit{When $y_{t}$, $\boldsymbol{x}_{t}$, and
$z_{t}$ are all square-integrable martingales, $\left\langle \boldsymbol{\eta},\boldsymbol{M}_{t}\right\rangle $
are martingales with regard to $^{\forall}\boldsymbol{\eta}\in\mathbb{R}^{J}$,
and assume, 
\[
\mathbb{E}\left[\left.\left\langle \boldsymbol{\eta},\varDelta\boldsymbol{M}_{t}\right\rangle ^{2}\right|\left\{ y_{s},\boldsymbol{x}_{s}\right\} _{s=1}^{t}\right]=\left\langle \boldsymbol{\eta},\Sigma\left(t\right)\boldsymbol{\eta}\right\rangle ,
\]
where $\Sigma\left(t\right)$ is a $\left\{ y_{s},\boldsymbol{x}_{s}\right\} _{s=1}^{t}$-measurable
$\mathbb{R}^{J\times J}$ matrix value function. Then, 
\[
\mathbb{E}\left[z_{t+1}-z_{t}\left|\boldsymbol{x}_{t+1},\left\{ y_{s},\boldsymbol{x}_{s}\right\} _{s=1}^{t}\right.\right]=0.
\]
Therefore, we have 
\[
\mathbb{E}\left[y_{t+1}\left|\boldsymbol{x}_{t+1},\left\{ y_{s},\boldsymbol{x}_{s}\right\} _{s=1}^{t}\right.\right]=y_{t}+\theta_{y_{t+1}}^{*}-\theta_{y_{t}}^{*}+\left\langle \boldsymbol{\theta}_{t+1}^{*},\varDelta\boldsymbol{x}_{t}\right\rangle ,
\]
where eq.~\eqref{eq:FSmin} is the optimal approximate projection
to the joint plane of covariate processes, $x_{j,t+1}$, $j\in J$, and
the extra term $\theta_{y_{t+1}}^{*}-\theta_{y_{t}}^{*}$.}

\end{remark}

Since the process, $\left\{ z_{t}\right\} $, is a square-integrable
martingale, if we consider $\sigma_{t+1}^{2}$ to be the quadratic
variation of $z_{t+1}$, then $\sigma_{t+1}^{2}$ is the conditional
variance of $\varDelta z_{t}$: $\sigma_{t+1}^{2}=\mathbb{E}\left[\left.\left(\varDelta z_{t}\right)^{2}\right|\left\{ y_{s},\boldsymbol{x}_{s}\right\} _{s=1}^{t}\right]$.

Here, we add a new assumption on the distribution of $\varDelta z_{t}$:
the probability distribution, $p_{t,\varDelta z}$, for $\varDelta z_{t}$
only has the scale parameter, $\sigma_{t+1}$. In other words, if
we scale transform $\varDelta z_{t}\rightarrow c\varDelta z_{t}$
and $\sigma_{t+1}^{2}\rightarrow c^{2}\sigma_{t+1}^{2}$, $p_{t,\varDelta z}$
is scale-invariant: 
\begin{equation}
p_{t,c\varDelta z_{t}}\left(\left.\cdot\right|\sigma_{t+1}\right)=p_{t,\varDelta z_{t}}\left(\left.\cdot\right|c\sigma_{t+1}\right).\label{eq:scale-invariant}
\end{equation}
This assumption justifies the approximate normality of $p_{t,\varDelta z}$,
when the time interval, $\varDelta t$, is not large (i.e. we assume
that the 1-step ahead is not in the distant future).

Let us rewrite the extra term as $\theta_{0,t+1}^{*}=\theta_{y_{t+1}}^{*}-\theta_{y_{t}}^{*}$,
and represent the unknown parameter at $t$ with $\left(\theta_{0,t+1}^{*},\boldsymbol{\theta}_{t+1}^{*}\right)$.
Then, noting that $y_{t+1}=y_{t}+\varDelta y_{t}$, the probability
density function of the increment of $y_{t}$, can be written using
the probability density function of its martingale process, $\varDelta z_{t}=z_{t+1}-z_{t}$,
as 
\[
p_{t}\left(y_{t+1}\left|\left\{ y_{s},\boldsymbol{x}_{s}\right\} _{s=1}^{t},\theta_{0,t+1}^{*},\boldsymbol{\theta}_{t+1}^{*},\sigma_{t}^{2},\varDelta\boldsymbol{x}_{t}\right.\right)=p_{t,\varDelta z}\left(\frac{y_{t+1}-\left(y_{t}+\theta_{0,t+1}^{*}+\left\langle \boldsymbol{\theta}_{t+1}^{*},\varDelta\boldsymbol{x}_{t}\right\rangle \right)}{\sigma_{t}^{2}}\right).
\]

\begin{remark} \textit{The specification of $\varDelta z_{t}$ remains
unknown, though we assume $\varepsilon_{t+1}$ follows a normal distribution.
This implies that the minimaxity of the predictive distribution can
be improved, strictly speaking, as the formulation of $\varDelta z_{t}$
can be improved. However, since $\left\{ z_{t}\right\} $ is assumed
to be a square-integrable martingale, if the interval of $\varDelta z_{t}$
is sufficiently small, $\varDelta z_{t}$ is approximately normal
from Donsker's invariant principle. In the following, we do not consider
the formulation of $\varDelta z_{t}$.} \end{remark}

We consider some assumptions on $y_{t}$, in preparation for minimax
analysis. For the decomposition of $y_{t}$ (eq.~\ref{eq:FSmin}),
we assume the martingale component of $\boldsymbol{x}_{t}$ and the
increment of its orthogonal martingale process, $z_{t}$, are square-integrable
for all $t$. The probability distribution function of $z_{t}$ is
denoted as $p_{z_{t}}\left(\cdot\right)$, where $p_{z_{t}}\left(\cdot\right)$
can depend on $t$, given the subscript $t$, and further assume eq.~\eqref{eq:scale-invariant}.
Note that $p_{z_{t}}\left(\cdot\right)$ can be considered as a probability
density conditional on $\boldsymbol{x}_{t}$, from \citet{williams1991probability},
Section 9.5, Section 14.14.

\begin{theorem}

\textit{Suppose the following conditions hold:}

\textit{(i) The predictive process, $y_{t}$, and covariate processes,
$x_{j,t},j\in J$, are square-integrable processes, and are filtration
$\left\{ y_{s},\boldsymbol{x}_{s}\right\} _{s=1}^{t}$-adapted;}

\textit{(ii) For the martingale component $\boldsymbol{M}_{t}$ of
$\boldsymbol{x}_{t}$, $\left\langle \boldsymbol{\eta},\boldsymbol{M}_{t}\right\rangle $
are martingales with regard to $^{\forall}\boldsymbol{\eta}\in\mathbb{R}^{J}$,
and assume there exists a $\left\{ y_{s},\boldsymbol{x}_{s}\right\} _{s=1}^{t}$-measurable
$\mathbb{R}^{J\times J}$ matrix value function, $\Sigma\left(t\right)$,
and 
\[
\mathbb{E}\left[\left.\left\langle \boldsymbol{\eta},\varDelta\boldsymbol{M}_{t}\right\rangle ^{2}\right|\left\{ y_{s},\boldsymbol{x}_{s}\right\} _{s=1}^{t}\right]=\left\langle \boldsymbol{\eta},\Sigma\left(t\right)\boldsymbol{\eta}\right\rangle .
\]
Then, the random walk DLM predictive distribution, $\hat{p}_{t}^{\frac{1}{v_{0}}\mathsf{m}}\left(y_{t+1}\left|\left\{ y_{s},\boldsymbol{x}_{s}\right\} _{s=1}^{t}\right.\right)$,
with Lebesgue measure prior, $\mathsf{m}$, and Jefferys' prior, $\frac{1}{v_{0}}$,
for the initial prior ($t=0$), is exact minimax with regard to the
Kullback-Leibler risk (eq.~\ref{eq:KLrisk}): 
\[
R_{\mathsf{KL}}\left(\left(\theta_{0,t+1}^{*},\boldsymbol{\theta}_{t+1}^{*},\sigma_{t+1}\right),\hat{p}_{t}^{\frac{1}{v_{0}}\mathsf{m}}\right)=\min_{\hat{p}_{t}}\max_{\theta_{0,t+1}^{*},\theta_{t+1}^{*},\sigma_{t+1}}R_{\mathsf{KL}}\left(\left(\theta_{0,t+1}^{*},\boldsymbol{\theta}_{t+1}^{*},\sigma_{t+1}\right),\hat{p}_{t}\right).
\]
} \end{theorem}

For $\min_{\hat{p}_{t}}$ in the minimax theorem, we construct the
Bayes predictive distribution of the DLM model following Section~\ref{app:DLMFS}
by corresponding the DLM parameters to the unknown parameters, $\{\theta_{0,t+1}^{*},\boldsymbol{\theta}_{t+1}^{*},\sigma_{t+1}\}$.
This allows us to construct the minimum Bayes risk distribution.

Given the above specification, the proof for this theorem follows
in a similar manner.

\subsection{Corresponding the DLM parameters to the unknown parameters\label{app:DLMFS}}

While the corresponding of DLM parameters to the unknown parameters
of the DGP was straightforward for the case where we assumed Gaussian
processes, the semi-martingale case is less straightforward.

Consider the random walk DLM, 
\begin{subequations}
\label{eq:BPS} 
\begin{alignat}{2}
y_{t+1} & =\theta_{0,t+1}+\left\langle \boldsymbol{\theta}_{t+1},\boldsymbol{x}_{t+1}\right\rangle +\varepsilon_{t+1}, & \quad & \varepsilon_{t+1}\sim N\left(0,v_{t+1}\right),\label{eq:BPS_measure}\\
\boldsymbol{\theta}_{t+1}^{\prime} & =\boldsymbol{\theta}_{t}^{\prime}+\boldsymbol{\omega}_{t+1}, &  & \boldsymbol{\omega}_{t+1}\sim N\left(0,v_{t+1}\boldsymbol{W}_{t+1}\right),\label{eq:RWstate}
\end{alignat}
\end{subequations}
 where $\boldsymbol{\theta}_{t}^{\prime}$ is the vector $\left[\theta_{0,t},\boldsymbol{\theta}_{t}\right]^{\top}$,
with intercept, $\theta_{0,t}$, $\boldsymbol{x}_{t}^{\prime}$ is
$\left[1,\boldsymbol{x}_{t}\right]^{\top}$, and $\boldsymbol{\omega}_{t+1}$
is a $J+1$-dimensional vector of normal random numbers. We will now
correspond the observational equation (eq.~\ref{eq:BPS_measure})
to the FS decomposition in eq.~\eqref{eq:FSmin}. First, noting that,
\begin{align*}
\left\langle \boldsymbol{\theta}_{t+1},\boldsymbol{x}_{t+1}\right\rangle -\left\langle \boldsymbol{\theta}_{t},\boldsymbol{x}_{t}\right\rangle  & =\left\langle \Delta\boldsymbol{\theta}_{t},\boldsymbol{x}_{t}\right\rangle +\left\langle \boldsymbol{\theta}_{t+1},\Delta\boldsymbol{x}_{t}\right\rangle \\
\theta_{0,t+1} & =\theta_{0,t}+\omega_{0,t},
\end{align*}
we have 
\[
y_{t+1}=\theta_{0,t+1}+\left\langle \boldsymbol{\theta}_{t},\boldsymbol{x}_{t}\right\rangle +\left\langle \Delta\boldsymbol{\theta}_{t},\boldsymbol{x}_{t}\right\rangle +\left\langle \boldsymbol{\theta}_{t+1},\Delta\boldsymbol{x}_{t}\right\rangle +\varepsilon_{t+1},
\]
by setting 
\begin{align*}
\theta_{0,t+1} & =\mathbb{E}\left[\theta_{0,t}\left|\left\{ y_{s},\boldsymbol{x}_{s}\right\} _{s=1}^{t}\right.\right]-\left\langle \Delta\boldsymbol{\theta}_{t},\boldsymbol{x}_{t}\right\rangle ,\\
\mathbb{E}\left[\theta_{0,t}\left|\left\{ y_{s},\boldsymbol{x}_{s}\right\} _{s=1}^{t}\right.\right] & =\theta_{0,0}+y_{t}-\left\langle \boldsymbol{\theta}_{t},\boldsymbol{x}_{t}\right\rangle ,
\end{align*}
where $\theta_{0,0}$ is an initial value of $\left\{ \theta_{0,t}\right\} $.
From this, we have 
\begin{equation*}
\mathbb{E}\left[\theta_{0,t+1}\left|\left\{ y_{s},\boldsymbol{x}_{s}\right\} _{s=1}^{t}\right.\right]  =\theta_{0,0}+y_{t}-\left\langle \boldsymbol{\theta}_{t},\boldsymbol{x}_{t}\right\rangle ,
\end{equation*}
where $\theta_{0,t}$ follows a random walk. Therefore, eq.~(\ref{eq:BPS_measure})
is transformed to 
\begin{equation}
y_{t+1} =y_{t}+\theta_{0,0}+\left\langle \boldsymbol{\theta}_{t+1},\varDelta\boldsymbol{x}_{t},\right\rangle +\varepsilon_{t+1},\label{eq:FS_BPS}
\end{equation}
where the parameters considered in eq.~(\ref{eq:FSmin}), $\left(\theta_{0,t+1}^{*},\boldsymbol{\theta}_{t+1}^{*}\right)$,
correspond to $\left(\boldsymbol{\theta}_{t+1}^{\prime}\right)$ in
eq.~(\ref{eq:FS_BPS}) through 
\begin{align*}
\theta_{0,t+1}^{*} & \leftrightarrow\theta_{0,t+1}+\left\langle \boldsymbol{\theta}_{t},\boldsymbol{x}_{t}\right\rangle +\left\langle \Delta\boldsymbol{\theta}_{t},\boldsymbol{x}_{t}\right\rangle -y_{t},\\
\boldsymbol{\theta}_{t+1}^{*} & \leftrightarrow\boldsymbol{\theta}_{t+1}.
\end{align*}
Further, we assume the error, $\varepsilon_{t+1}$, in eq.~(\ref{eq:FS_BPS}),
follows a normal distribution. In general, the distribution of $\varepsilon_{t+1}$
and $z_{t+1}$, obtained from the data generating process, need not
be the same. Since $z_{t+1}$ is a process that includes omitted variables,
it is unrealistic to exactly identify it.

\section{Predictive density synthesis \label{sec:bps}}

As an extension and application of the above theoretical results,
we consider the problem of ensembling multiple predictive distributions.
In time series contexts, model misspecification and uncertainty pose
a significant problem, particularly for predictive tasks. Ensemble
methods, which include model averaging and forecast combination, have
been used extensively to mitigate this uncertainty. In the Bayesian
context, some popular methods include Bayesian model averaging \citep[BMA:][]{hoeting1999bayesian}
and Bayesian stacking \citep{yao2018using}. Recent interest in combining
distributional information to account for better uncertainty quantification
has spurred development in ensembling predictive densities \citep[e.g.][]{HallMitchell2007,Geweke2011,Aastveit2014,diebold2019machine}.

While ensembling strategies have shown to be effective in practice,
improving over individual forecasts, the theoretical properties of
the ensembled predictions have not been investigated in a non-stationary
time series context. We are specifically interested in the Bayesian
predictive synthesis framework proposed in \citet{mcalinn2019dynamic},
which extended the work on expert opinion analysis \citep{GenestSchervish1985,West1992c,West1992d}
and generalizes other ensembling strategies as a special case. In
particular, \citet{mcalinn2019dynamic} developed a specific form
of dynamic BPS for time series data, utilizing a random walk DLM as
its synthesis function. Given its empirical success \citep{bianchi2018large,mcalinn2017multivariate,capek2020macroeconomic,mcalinn2017dynamic,aastveit2022quantifying,bernaciak2022loss},
and relation to the random walk DLM, a natural question is whether
the results in Sections~\ref{sec:minmax} and \ref{App:semimart}
also hold in this context.

\subsection{Problem set-up}

In the ensemble prediction set-up, a decision maker, $\mathcal{D}$,
is interested in forecasting a quantity $y\in\mathbb{R}$, and solicits
$J$ agents, where agents encompass models, forecasters, institutions,
etc. Agents are denoted as $\mathcal{A}_{j}$, $j\in J.$ Each agent,
$\mathcal{A}_{j}$, produces a predictive distribution, $h_{j}(\hat{y}_{j})$,
which comprises the set, $\mathcal{H}=\left\{ h_{1}(\cdot),\cdots,h_{J}(\cdot)\right\} $.

In the BPS framework, the set, $\mathcal{H}$, is synthesized via
Bayesian updating with the posterior of the form, 
\begin{equation}
p(y|\mathcal{H})=\int_{\hat{\boldsymbol{y}}}\alpha(y|\hat{\boldsymbol{y}})\prod_{j=\seq1J}h_{j}(\hat{y}_{j})d\hat{\boldsymbol{y}},\label{eq:theorem1}
\end{equation}
where $\hat{\boldsymbol{y}}=\hat{y}_{\seq1J}=(\hat{y}_{1},\ldots,\hat{y}_{J})^{\top}$ is
a $J-$dimensional latent vector and $\alpha(y|\hat{\boldsymbol{y}})$ is
a conditional probability density function for $y$ given $\hat{\boldsymbol{y}}$,
called the synthesis function. Eq.~\eqref{eq:theorem1} is only a
coherent Bayesian posterior if it satisfies the consistency condition\footnote{Specifically, the consistency condition states that given $\mathcal{D}$'s
prior, $p(y)$, and her prior expectation of what the agents will
produce before observing the forecasts, $m(\hat{\boldsymbol{y}})$, her
priors have to be consistent: $p(y)=\int_{\hat{\boldsymbol{y}}}\alpha(y|\hat{\boldsymbol{y}})m(\hat{\boldsymbol{y}})d\hat{\boldsymbol{y}}$,
for eq.~\eqref{eq:theorem1} to be a coherent Bayesian posterior.} \citep[see,][]{GenestSchervish1985,West1992c,West1992d,mcalinn2019dynamic}.

For time series data, \citet{mcalinn2019dynamic} developed a dynamic
specification of BPS. In the dynamic setting, $\mathcal{D}$ is sequentially
predicting a time series $y_{t},t=1,2,\ldots,$ and receives, for
all periods, forecast densities from $\mathcal{A}_{j}$, $j\in J.$
At each time $t,$ $\mathcal{D}$ aims to forecast $y_{t+1}$ and
receives the set $\mathcal{H}_{t+1}=\left\{ h_{1,{t+1}}(\hat{y}_{1,{t+1}}),\cdots,h_{J,{t+1}}(\hat{y}_{J,{t+1}})\right\} $,
a collection of agent predictive densities produced at $t$, from
the agents. The full information set used by $\mathcal{D}$ is thus
$\{\y_{\seq1{t}},\ \mH_{\seq1{t+1}}\}.$ As time passes, $\mathcal{D}$
learns about the characteristics of the agents (bias, dependencies, etc.).
Thus, the Bayesian model will involve parameters for which $\mathcal{D}$
updates information over time. From eq.~\eqref{eq:theorem1}, $\mathcal{D}$
has a time $t$ distribution for $y_{t+1}$ of the form 
\begin{equation}
p(y_{t+1}|\bPhi_{t+1},y_{\seq1{t}},\mH_{\seq1{t+1}})=\int\alpha_{t}(y_{t+1}|\hat{\boldsymbol{y}}_{t+1},\bPhi_{t+1})\prod_{j=\seq1J}h_{j,{t+1}}(\hat{y}_{j,{t+1}})d\hat{y}_{j,{t+1}},\label{eq:theorem}
\end{equation}
where $\bPhi_{t+1}=(\theta_{0,{t+1}},\btheta_{t+1},v_{t+1},\boldsymbol{W}_{t+1})$,
and the synthesis function, $\alpha_{t}(y_{t+1}|\hat{\boldsymbol{y}}_{t+1},\bPhi_{t+1})$,
in \citet{mcalinn2019dynamic} is specified as a standard dynamic
linear model eqs.~\ref{eq:DLMa}-\ref{eq:DLMc}.

Finally, the transition probability can be obtained by integrating
out the agent processes, $\hat{\boldsymbol{y}}_{t+1}$: 
\begin{align}
p_{t} & \left(y_{t+1}\left|\left\{ y_{s},\hat{\boldsymbol{y}}_{s}\right\} _{s=1}^{t},\theta_{y_{t+1}}^{*},\boldsymbol{\theta}_{t+1}^{*},v_{t+1}\right.\right)\\
 & =\int_{\mathbb{R}^{J}}\frac{1}{\sqrt{2\pi v_{t+1}}}\exp\left(-\frac{\left(y_{t+1}-\left\langle \boldsymbol{\theta}_{t+1}^{\prime},\hat{\boldsymbol{y}}^{\prime}_{t+1}\right\rangle \right)^{2}}{2v_{t+1}}\right)\sum_{j}h_{j,t+1}\left(\hat{y}_{j,t+1}\right)d\hat{y}_{j,t+1},\label{eq:TransitionFS}
\end{align}
where $\hat{\boldsymbol{y}}_{t}^{\prime}$ is
$\left[1,\hat{\boldsymbol{y}}_{t}\right]^{\top}$.

\subsection{Predictive minimaxity for predictive synthesis}

Given that the problem setting now involves densities to be synthesized,
we show that Theorem.~\ref{thm:Minimax} holds for the convolution
case: \begin{theorem} \textit{When Theorem.~\ref{thm:Minimax} holds,
i.e., when the random walk DLM is path-wise minimax, the convolution
of BPS by $h_{j,t+1}(\hat{y}_{j,t+1})$, for $j=1{:}J$, is also minimax.} 
\begin{proof}
From the proof of Theorem.~\ref{thm:Minimax}, the KL risk is risk
constant. Therefore, 
\[
\int_{\mathbb{R}^{J}}R_{\mathsf{KL}}\left(\left(\theta_{0,t+1}^{*},\boldsymbol{\theta}_{t+1}^{*},\sigma_{t+1}\right),\hat{p}_{t}^{\frac{1}{v_{0}}\mathsf{m}}\right)\prod_{j=1}^{J}h_{j}\left(d\hat{y}_{j,t+1}\right)=R_{\mathsf{KL}}\left(\left(\theta_{0,t+1}^{*},\boldsymbol{\theta}_{t+1}^{*},\sigma_{t+1}\right),\hat{p}_{t}^{\frac{1}{v_{0}}\mathsf{m}}\right).
\]
Then, from the log-sum inequality, we have 
\begin{align*}
R_{\mathsf{KL}}\left(\left(\theta_{0,t+1}^{*},\boldsymbol{\theta}_{t+1}^{*},\sigma_{t+1}\right),\int_{\mathbb{R}^{J}}\hat{p}_{t}^{\frac{1}{v_{0}}\mathsf{m}}\prod_{j=1}^{J}h_{j}\left(d\hat{y}_{j,t+1}\right)\right)\leqq\\
\int_{\mathbb{R}^{J}}R_{\mathsf{KL}}\left(\left(\theta_{0,t+1}^{*},\boldsymbol{\theta}_{t+1}^{*},\sigma_{t+1}\right),\hat{p}_{t}^{\frac{1}{v_{0}}\mathsf{m}}\right)\prod_{j=1}^{J}h_{j}\left(d\hat{y}_{j,t+1}\right).
\end{align*}
\end{proof}
\end{theorem}

The Gaussianity assumptions in the above theorem can also be relaxed
using semi-martingale processes, as in Section~\ref{App:semimart}.
This result proves that dynamic BPS, as proposed in \citet{mcalinn2019dynamic},
produces exact minimax predictive distributions.

Additionally, the above results show that linear combinations of forecasts
(e.g. equal weight averaging, Bayesian model
averaging, etc.) cannot produce predictive distributions that are
exact minimax. This is because $\max_{\theta_{0,t+1}^{*}}\mathsf{KL}\left(\left(\theta_{0,t+1}^{*},\boldsymbol{\theta}_{t+1}^{*}\right),\hat{p}_{t}\right)$
can be infinite due to the BPS intercept, $\theta_{0,t+1}^{*}$,
being under-parameterized for linear combination methods. This not
only shows that BPS is theoretically better (in terms of being minimax),
but that no linear combination of forecasts can be better under this criterion.

\section{Empirical Applications\label{sec:emst}}

We consider three datasets from epidemiology, climatology, and economics
(for a simulation study, see Appendix~\ref{sec:sim}). All three
datasets have common characteristics that highlight our theoretical
results: they are all non-stationary time series data, a ``true''
model is not something that can be expected to obtain, and future
predictions are tied to some decision making process, where uncertainty
quantification is critical.

\subsubsection*{Weekly average new cases of COVID-19 in Tokyo}

The first dataset concerns predicting the weekly average new cases
of COVID-19 in Tokyo, Japan. The prediction of positive cases of COVID-19
has become a critical task in tackling the pandemic, drawing considerable
interest \citep[see, e.g.,][]{awan2020prediction,gecili2021forecasting}.
This is because accurate predictions of new cases, as well as their
variation, have been shown to be imperative for resource management
in hospitals and local governments. For this application, we use daily
data on new cases to predict the next week's average new cases, as
the decision maker's interests are not necessarily in new cases ``tomorrow,''
but in the coming week, in order to, e.g., prepare medical resources.
The dataset, taken from the Tokyo Metropolitan Government, begins
on 2020/1/16, when the collection of data began, and ends at the end
of 2021. Our interest is to predict over the entirety of 2021, where
Tokyo experienced three major waves of new cases, including the Delta
variant, and began to experience the Omicron variant.

\subsubsection*{Monthly global mean sea level }

The second dataset aims to predict the monthly global mean sea level
from TOPEX and Jason Altimetry \citep{nerem2010estimating,masters2012comparison}.
The global mean sea level is a key indicator for climate change, reflecting
the ocean's thermal expansion, meltwater from mountain glaciers, and
discharge from the Greenland and Antarctic ice sheets. As such, the
prediction of the global mean sea level is essential in understanding
future trends in climate change. The satellite data is measured every
ten days, though the focus is on the monthly (30-day cycle) trends
of sea level, and its prediction. This is done, as with the COVID
dataset, because shorter frequency (i.e. 10-day cycle) is prone to
local fluctuations that may not be relevant for climate change, and
might not be of interest to predict. We consider the monthly frequency
to be of interest in this application in order to gauge the accuracy
of ensemble strategies.

\subsubsection*{Monthly aggregate inflation in the U.S.}

The third is a large-scale macroeconomic dataset to forecast monthly
U.S. inflation, a context of interest in the field \citep{Cogley2005,Nakajima2010}.
Specifically, forecasting inflation is one of the central bank's key
mandates, where that information is used to set policy. Typical horizon
of interest is 1-, 3-, 12-, and 24-months ahead. Although long term
forecasts can be, and are, cast as one step ahead forecasts within
the BPS framework, as BPS can directly calibrate to the horizon of
interest, we focus on the shortest horizon of interest, 1-month, as
this exemplifies the theoretical results the most. For this study,
we utilize a high-dimensional panel of $N=128$ monthly macroeconomic
and financial covariates from \citet{mccracken2016fred} (details
of each variable can be found therein). This context of using a large
dataset for economic forecasting is of topical interest, as leveraging
large datasets for decision making is crucial for central banks setting
their policy.

\subsection{Application design}

\subsubsection{Agent models}

All three applications are conducted in a similar manner, with some
variation to suit the context. We first divide the dataset equally
into the training and evaluation set. The first half of the training
set is used to build the individual models, in parallel. This is 2020/1/16--2020/6/27
for the COVID dataset, 1993:01--2002:05 for the sea level dataset,
and 1986:01--1993:06 for the economic dataset. Specifically, we use
a random walk DLM, for each agent model, $j=1{:}J$, 
\begin{subequations}
\label{eq:dlmde} 
\begin{align}
y_{t} & =\bbeta_{tj}^{\top}\x_{t-1,j}+\epsilon_{tj},\quad\epsilon_{tj}\sim N(0,\nu_{tj}),\label{eq:dlmde0}\\
\bbeta_{tj} & =\bbeta_{t-1,j}+\u_{tj},\quad\u_{tj}\sim N(0,\nu_{tj}\U_{tj}),\label{eq:dlmde1}
\end{align}
\end{subequations}
 where the coefficients follow a random walk and the observation variance
evolves with discount stochastic volatility. Priors for each predictive
regression are set as $\bbeta_{0j}|v_{0j}\sim N(\m_{0j},(v_{0j}/s_{0j})\I)$
with $\m_{0j}={\bf 0}'$ and $1/v_{0j}\sim G(n_{0j}/2,n_{0j}s_{0j}/2)$
with $n_{0j}=10,s_{0}=0.01$.

In the COVID dataset, each model is built with different autoregressive
lags, covariates, and day of the week indicators to capture the variation
in testing capabilities. Since the data are daily, and the interest
is the coming weekly average, we consider four models: AR(1)+Policy
(Policy: two indicators for the two levels of state of emergency),
AR(1)+Vax (Vax: six covariates for the vaccination rates for the first,
second, and third shots for the entire population and 65 and above),
AR(1)+Mob+Temp (Mob: six covariates for weekly average Google mobility
for retail and recreation, grocery and pharmacy, parks, stations,
workplaces, and residential; Temp: three covariates of weekly average
rainfall, temperature, and humidity), AR(3) and AR(7).

For the sea level dataset, each model is built with different autoregressive
lags: AR(1), AR(3), AR(9), and AR(18), as the data are monthly (30-day
cycle).

For the economic dataset, the covariates are the $N=128$ monthly
macroeconomic and financial covariates, partitioned into eight groups
based on the existing qualitative classification. The eight main categories
are: Output and Income, Labor Market, Consumption and Orders, Orders
and Inventories, Money and Credit, Interest Rate and Exchange Rates,
Prices, and Stock Market.

For the next half of the training data, while the individual DLMs
are sequentially updated in parallel, we also begin calibrating and
training the ensemble methods, including dynamic BPS. This is 2020/6/28--2020/12/31
for the COVID dataset, 2002:06--2010:12 for the sea level dataset,
and 1986:01-2000:12 for the economic dataset.

Finally, the latter half of the entire dataset is used as the evaluation
period, where we compare predictive performances of the ensemble methods
in an online manner, mirroring real world situations. This is 2021/1/1--2021/12/31
for the COVID dataset, 2011:01--2020:12 for the sea level dataset,
and 2001:01-2015:12 for the economic dataset. Updating of the individual
models, as well as competing strategies, is done sequentially for
each $t$ during this period.

We compare the mean squared forecast error (MSFE) and the log predictive
density ratio (LPDR). If we assume
a Gaussian process, the performance relation in KL risk is equivalent
to the performance relation in the MSE risk. The log predictive density
ratios (LPDR) for each $t$ is 
\begin{equation*}
\mathrm{LPDR}_{\seq1t}=\sum_{i=\seq1t}\mathrm{log}\{p_{*}(y_{t+1}|y_{1{:}t})/p_{\mathrm{BPS}}(y_{t+1}|y_{1{:}t})\}
\end{equation*}
where $p_{*}(y_{t+1}|y_{1{:}t})$ is the predictive density of the
model being compared with. This metric directly corresponds to the KL divergence. Comparing both the MSFE and LPDR provides
a more holistic assessment of the predictive performance.

\subsubsection{{Competing strategies}}

We compare our framework against each agent DLM and benchmark ensemble
strategies. Our goal is to show that dynamic BPS is, not only superior
to other ensemble methods, but superior to the forecasts that it is
ensembling; what would be considered a necessary requirement for ensemble
methods to achieve.

Specifically, we compare dynamic BPS against equal weight averaging,
BMA, Mallows $C_{p}$ averaging of \citet{hansen2007least}, and the
exponential weights algorithm of \citet{littlestone1994weighted}.
Equal weight averaging is standard in much of the forecast combination
literature and ensemble learning literature, being almost ubiquitous
in the latter. While the approach of taking the arithmetic mean of
forecasts is very simple, it is, nonetheless, considered a benchmark
that is hard to beat in real data applications \citep{Genre2013}.
On the other hand, BMA has the benefit of asymptotically converging
to the true model, when the true model is nested ($\mM$-closed),
though it tends to converge to the ``wrong" model in an $\mM$-open
setting. This is because, despite its name, BMA is more of a selection
strategy rather than an ensemble strategy. In almost all real applications,
including this one, we cannot assume that the true model is nested,
which guarantees that BMA will converge to the ``wrong" model. Nonetheless,
it is standard in much of the Bayesian literature. Mallows $C_{p}$
averaging has the property of being asymptotically optimal, achieving
the lowest possible squared error in a class of discrete model average
estimators. The exponential weights algorithm, otherwise called the
weighted majority algorithm, bounds the regret of not choosing the
best agent.

The prior specification for dynamic BPS is $\theta_{0,0}|v_{0}\sim N(0,v_{0}/s_{0})$,
$\btheta_{0}|v_{0}\sim N(\m_{0},(v_{0}/s_{0})\sigma^{2})$ with $\m_{0}=(0,\one'/J)'$
and $1/v_{0}\sim G(n_{0}/2,n_{0}s_{0}/2)$ with $n_{0}=10,s_{0}=0.002$
(the variance is scaled by 1/100 for the sea level dataset to reflect
the variation in the data). The discount factors are set to $(\beta,\delta)=(0.95,0.99)$.

\subsection{Results}

\begin{table}[t!]
\centering \caption{Out-of-sample forecast performance}
\vspace{0.1in}
 \begin{justify} {\footnotesize{}{}{}{}{This table reports the
out-of-sample comparison of dynamic BPS against each individual model,
equal weight averaging (EW), and BMA for forecasting the three real-world
datasets. Performance comparison is based on the ratio of Mean Squared
Forecast Error (MSFE), MSFE$_{*}$/MSFE$_{BPS}$, where ${*}$ denotes
the method compared against, and the Log Predictive Density Ratio
(LPDR).}} \end{justify} 
\global\long\def\arraystretch{1.2}%
\vspace{0.5em}
 \textbf{Panel A: }Forecasting Weekly Average New COVID Cases in 2021
\vspace{0.2cm}
 \\
 \resizebox{1\textwidth}{!}{%
\begin{tabular}{lcccccccccc}
\hline 
\multicolumn{6}{c}{Agent models} & EW  & BMA  & Mallows  & ExW  & BPS \tabularnewline
\hline 
 & AR(1)+Policy  & AR(1)+Vax  & AR(1)+Mob+Temp  & AR(3)  & AR(7)  &  &  &  &  & \tabularnewline
\hline 
MSFE  & 1.9952  & 1.3195  & 1.9193  & 1.7313  & 1.5722  & 1.4980  & 1.3920  & 1.5574  & 1.6696  & 1.0000 \tabularnewline
LPDR  & -100.17  & -38.60  & -98.07  & -81.40  & -76.42  & -66.73  & -54.57  & -74.34  & -69.00  & -- \tabularnewline
\hline 
\end{tabular}}

\vspace{1em}
 \textbf{Panel B: }Forecasting Monthly Global Mean Sea Level for 2011:01--2020:12
\vspace{0.2cm}
 \\
 \resizebox{0.8\textwidth}{!}{%
\begin{tabular}{lccccccccc}
\hline 
\multicolumn{5}{c}{Agent models} & EW  & BMA  & Mallows  & ExW  & BPS \tabularnewline
\hline 
 & AR(1)  & AR(3)  & AR(9)  & AR(18)  &  &  &  &  & \tabularnewline
\hline 
MSFE  & 1.4421  & 1.0508  & 1.0738  & 1.2013  & 1.0855  & 1.0726  & 1.0461  & 1.4149  & 1.0000 \tabularnewline
LPDR  & -25.17  & -5.26  & -4.89  & -10.62  & -9.94  & -4.87  & -6.77  & -24.16  & -- \tabularnewline
\hline 
\end{tabular}}

\vspace{1em}
 \textbf{Panel C: }Forecasting Monthly Inflation for 2001:01-2015:12
\vspace{0.2cm}
 \\
 \resizebox{1\textwidth}{!}{%
\begin{tabular}{lccccccccccccc}
\hline 
\multicolumn{9}{c}{Group-Specific Models} & EW  & BMA  & Mallows  & ExW  & BPS \tabularnewline
\hline 
 & %
\begin{tabular}{@{}c@{}}
Output \&\tabularnewline
Income\tabularnewline
\end{tabular} & %
\begin{tabular}{@{}c@{}}
Labor\tabularnewline
Market\tabularnewline
\end{tabular} & %
\begin{tabular}{@{}c@{}}
Consump.\tabularnewline
\end{tabular} & %
\begin{tabular}{@{}c@{}}
Orders \&\tabularnewline
Invent.\tabularnewline
\end{tabular} & %
\begin{tabular}{@{}c@{}}
Money\tabularnewline
\& Credit\tabularnewline
\end{tabular} & %
\begin{tabular}{@{}c@{}}
Int. Rate \& \tabularnewline
Ex. Rates\tabularnewline
\end{tabular} & Prices  & %
\begin{tabular}{@{}c@{}}
Stock\tabularnewline
Market\tabularnewline
\end{tabular} &  &  &  &  & \tabularnewline
\hline 
MSFE  & 1.4715  & 1.2003  & 12.8039  & 1.7601  & 1.6368  & 4.3103  & 1.1748  & 6.0074  & 2.0618  & 1.7601  & 1.2577  & 1.2779  & 1.0000 \tabularnewline
LPDR  & -40.48  & -42.05  & -233.09  & -59.15  & -56.34  & -134.18  & -20.00  & -171.21  & -88.81  & -60.40  & -21.91  & -28.22  & -- \tabularnewline
\hline 
\end{tabular}} \label{tab:Table003} 
\end{table}

Panels A, B, and C of Table~\ref{tab:Table003} show the predictive
results for the COVID dataset, sea level dataset, and macroeconomic
dataset, respectively. Over the three datasets, dynamic BPS improves
the out-of-sample forecasting accuracy compared to everything we consider,
including all individual models and the four ensemble methods. This
result is consistent for all datasets, though the performance gains
for BPS differ amongst datasets. Most notably, the sea level datasets
display lower gains compared to the other two, although the gains
are still significant. This is likely due to the consistent trend
exhibited in the data, where large fluctuations are not as clear as
the other two. Nonetheless, dynamic BPS shows improved performance
for both point and density evaluations.

In particular, for the COVID dataset and the macroeconomic dataset,
dynamic BPS consistently improves over the other ensemble methods
by large margins for both point and density forecasts. For the COVID
data, specifically, dynamic BPS has an LPDR of approximately -40 to
-100 across all ensemble methods compared against. The difficulty
of standard ensemble methods in dealing with non-stationary data is
also clear, with some even underperforming the agent models. For RMSE,
Mallows $C_{p}$ edges out only in the sea level dataset, and for
LPDR, Mallows $C_{p}$ slightly outperforms the best agent model in
the COVID dataset and BMA is practically equivalent to the best model
in the sea level dataset. Notably, the best performing agent model
in terms of RMSE is not the same as the LPDR, except for the macroeconomic
dataset. The standard ensemble methods seemingly struggle with this
dissonance in performance, though dynamic BPS is able to successfully
synthesize the information to outperform all in both metrics. This
shows that, while the theory we presented is with regard to KL risk,
dynamic BPS can outperform other ensemble methods for different decision
making problems.

Finally, we look at the interpretability of dynamic BPS through its
on-line coefficients. Figure.~\ref{fig:covid-coeff} shows the on-line
mean coefficients of BPS for the weekly COVID prediction dataset.
Most notably, the intercept is quite volatile compared to the other
coefficients. As the intercept can be viewed as the model set uncertainty
(uncertainty not captured by the agent models), this shows how the
different lags and covariates are not sufficient to predict weekly
COVID cases. The intercept also plays a critical role in the theoretical
results, and the fact that the intercept adapts over time at this
magnitude, as seen in the figure, shows this in action.

Looking at the other coefficients, several points stand out. First,
as an overall trend, the model with vaccination rates increases in
importance over time. This suggests the importance of vaccination
information to predict COVID cases, with its importance increasing
as the vaccination rate increases. Second, the AR(7) model, which
captures longer trends, spikes during increased COVID cases. This
could be explained by the lagged nature of COVID diagnosis. Notably,
this model is inversely correlated with the AR(1)+Mob+Temp model,
which models mobility and weather data. Third, the model with policy
indicators is effective during the first semi-emergency measure, though
its importance diminishes throughout the latter emergencies. This
is in line with the idea that these measures are not as effective
in the latter iterations, due to people adapting or being complacent
towards these measures.

As these results show, the on-line coefficients provide critical insight
into the predictive process, as well as highlight the theoretical
findings of this paper. In particular, the trend exhibited in the
intercept supports the theoretical results that the intercept plays
a crucial role. Added to the fact that dynamic BPS improves over other
ensemble methods, the result empirically supports the theoretical
findings.

\begin{figure}[t!]
\caption{Weekly COVID-19 forecasting: Dynamic BPS coefficients}
\centering \vspace{0.1in}
 \begin{justify} {\footnotesize{}{}{}{}{On-line posterior means
of BPS model coefficients sequentially computed each day over 2021.
The bottom bar plot displays weekly COVID-19 cases. Light gray background
indicates dates during semi-emergency measures and dark gray background
indicates dates during states of emergency.}} \end{justify} \includegraphics[width=1\textwidth]{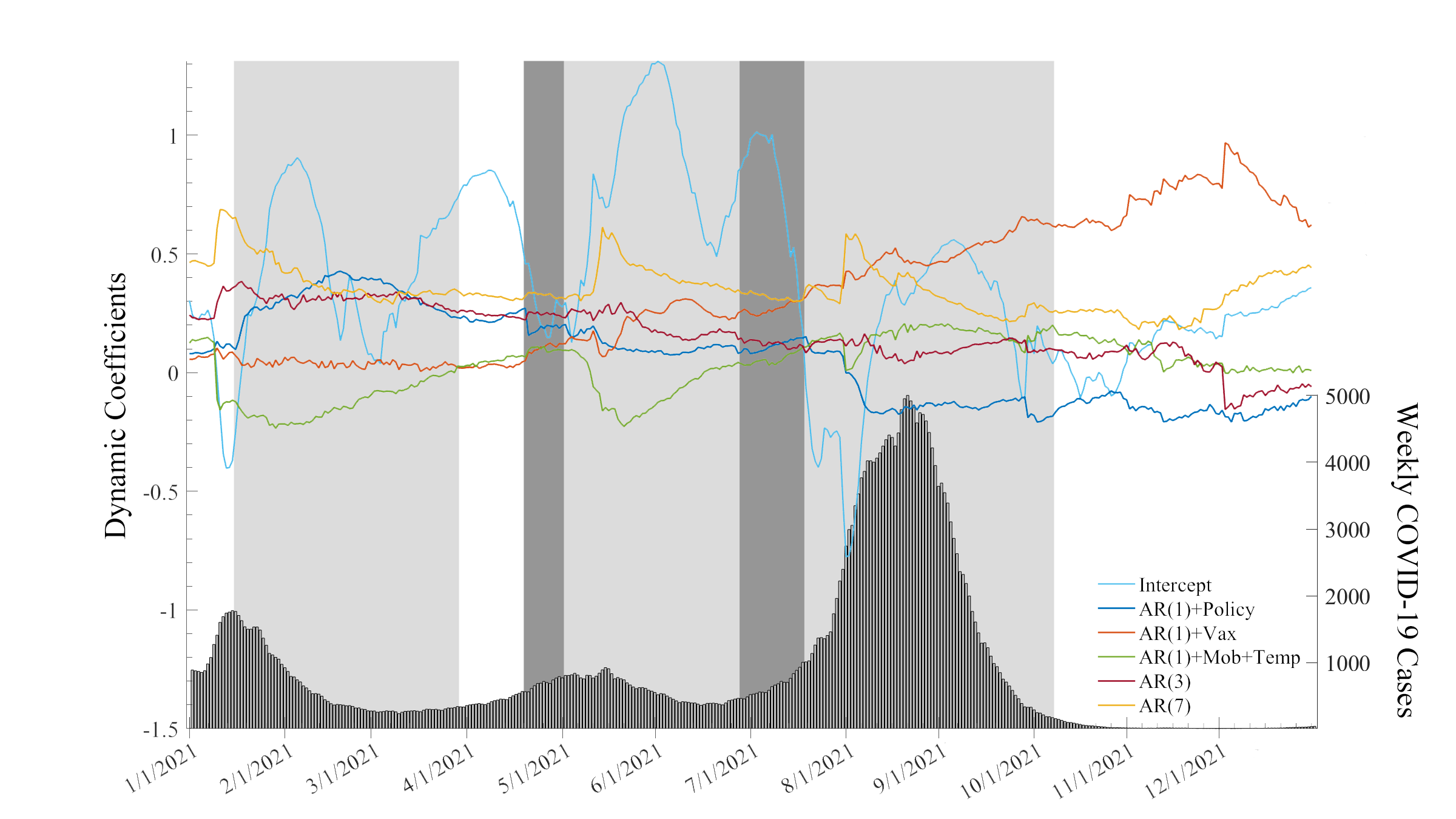}
\label{fig:covid-coeff} 
\end{figure}

\section{Summary and Additional Comments}

Despite the importance of non-stationary time series forecasts in
many domains, theoretical results are virtually non-existent. This
is due to the dynamic nature of the data, where assumptions-- such
as i.i.d.--, used in the existing literature to show asymptotic results,
do not hold. This has stymied theoretical evaluations of predictive
performances, especially when all models are misspecified. To evaluate
the performance of statistical methods in this setting, we define
the Kullback-Leibler risk for non-stationary data. Using this criterion,
we prove that a random walk DLM is exact minimax, providing finite
sample theoretical support for the method over other existing methods.
We then show that, extending the problem to predictive density synthesis,
dynamic BPS is also exact minimax. Through three relevant applications
from epidemiology, climatology, and economics, we show that BPS; (i)
outperforms individual models that it synthesizes, (ii) outperforms
other ensemble methods, as the theory indicates, and (iii) provides
useful information for decision making. All three real data applications
reinforce the validity and applicability of the theoretical result.

We believe the results presented in this paper open up several avenues
of research related to time series forecasting and decision making,
as well as causal inference with the potential outcomes framework.
If, for example, the interest is in predicting potential outcomes
for time series data, this framework can provide some theoretical
guarantee, even if the data cannot be assumed to be i.i.d. Different
problem settings will require further development of the framework,
though having the Kullback-Leibler risk defined is the first step
towards solving each problem.

One notable direction for future development is in the direction of
multivariate models, which is also relevant for policy decision making.
However, extending the proposed framework to a multivariate setting
is not trivial, due to the shift transform only holding for univariate
or very specific multivariate data. For this purpose, a different
approach to replace the shift transform is necessary, which will be
future work. Further connections can be made to the generalized Bayes
framework \citep{bissiri2016general}, as seen in \citet{bernaciak2022loss,tallman2022bayesian},
where the generalized Bayes framework is formulated as a special case
of BPS (i.e. the synthesis function is a loss function). The minimax
result, which is decision theoretic, could potentially produce fruitful
results in this direction as well.

\bibliographystyle{chicago}
\bibliography{reference}

\newpage{}

\appendix

\section*{Supplementary material for \protect \protect \protect \\
 Equivariant online predictions of non-stationary time series}

\section{Proof of Lemma \ref{lemma:Invariant}\label{sec:lemMinmax}}

The posterior distribution, $\pi\left(\boldsymbol{\theta}_{t}^{\prime}\left|\left\{ y_{s},\boldsymbol{x}_{s}\right\} _{s=1}^{t}\right.\right)$,
for $\boldsymbol{\theta}_{t}^{\prime}$ is updated as 
\[
\pi\left(\boldsymbol{\theta}_{t}^{\prime}\left|\left\{ y_{s},\boldsymbol{x}_{s}\right\} _{s=1}^{t}\right.\right)=\frac{q_{t}\left(y_{t}\left|\boldsymbol{\theta}_{t}^{\prime},\boldsymbol{x}_{t}^{\prime}\right.\right)\pi\left(\boldsymbol{\theta}_{t}^{\prime}\left|\mathcal{F}_{t-1}\right.\right)}{\int q_{t}\left(y_{t}\left|\boldsymbol{\theta}_{t}^{\prime},\boldsymbol{x}_{t}^{\prime}\right.\right)\pi\left(\boldsymbol{\theta}_{t}^{\prime}\left|\mathcal{F}_{t-1}\right.\right)d\boldsymbol{\theta}_{t}^{\prime}},
\]
via the Bayes theorem, and the predictive distribution for $\boldsymbol{\theta}_{t}^{\prime}$
(the prior distribution, $\pi\left(\boldsymbol{\theta}_{t}^{\prime}\left|\mathcal{F}_{t-1}\right.\right)$,
at $t$) is given as 
\begin{alignat*}{1}
\pi\left(\boldsymbol{\theta}_{t}^{\prime}\left|\mathcal{F}_{t-1}\right.\right) & =\int\pi\left(\boldsymbol{\theta}_{t}^{\prime}\left|\boldsymbol{\theta}_{t-1}^{\prime}\right.\right)\pi\left(\boldsymbol{\theta}_{t-1}^{\prime}\left|\mathcal{F}_{t-1}\right.\right)d\boldsymbol{\theta}_{t-1}^{\prime}.
\end{alignat*}
Since we assume that $\boldsymbol{\theta}_{t}^{\prime}$ follows a
random walk process (eq.~\ref{eq:RWstate}), the transition probability
is $\pi\left(\boldsymbol{\theta}_{t}^{\prime}\left|\boldsymbol{\theta}_{t-1}^{\prime}\right.\right)=N\left(\boldsymbol{\theta}_{t-1}^{\prime},W_{t}\right)$
and $\boldsymbol{\theta}_{t}^{\prime}$ is invariant with regard to
the scale-shift transformation, $\boldsymbol{\theta}_{t}^{\prime}\rightarrow\widetilde{\boldsymbol{\theta}^{\prime}}_{t}$:
$\pi\left(\boldsymbol{\theta}_{t}^{\prime}\left|\boldsymbol{\theta}_{t-1}^{\prime}\right.\right)=\pi\left(\widetilde{\boldsymbol{\theta}^{\prime}}_{t}\left|\widetilde{\boldsymbol{\theta}^{\prime}}_{t-1}\right.\right)$.
Further, since we assume that the prior distribution of $\boldsymbol{\theta}_{t}^{\prime}$,
at $t=0$, is a Lebesgue measure, $\mathsf{m}\left(\cdot\right)$,
from the scale-shift invariance of Lebesgue measures, 
\[
\mathsf{m}\left(\theta_{0,0}\right)=\mathsf{m}\left(\widetilde{\theta}_{0,0}\right),\ \mathsf{m}\left(\boldsymbol{\theta}_{0}\right)=\mathsf{m}\left(\widetilde{\boldsymbol{\theta}}_{0}\right),
\]
and iterating through the Bayes theorem, we have 
\begin{alignat*}{1}
\pi\left(\boldsymbol{\theta}_{t}^{\prime}\left|\left\{ y_{s},\boldsymbol{x}_{s}\right\} _{s=1}^{t}\right.\right) & =\pi\left(\widetilde{\boldsymbol{\theta}^{\prime}}_{t}\left|\left\{ y_{s},\boldsymbol{x}_{s}\right\} _{s=1}^{t}\right.\right).
\end{alignat*}
Therefore, 
\[
q_{t}^{\mathsf{m}}\left(\widetilde{y}_{t+1}\left|y_{t}\right.\right)=q_{t}^{\mathsf{m}}\left(y_{t+1}\left|y_{t}\right.\right),
\]
and thus the KL risk is constant for all $\left(\mathsf{a}^{*},\boldsymbol{\theta}_{t}^{*}\right)$:
\[
R_{\mathsf{KL}}\left(\left(\mathsf{a}^{*},\boldsymbol{\theta}_{t}^{*}\right),q_{t}^{\mathsf{m}}\right)=R_{\mathsf{KL}}\left(\left(0,0\right),q_{t}^{\mathsf{m}}\right)=c.
\]

\section{Proof of Theorem \ref{thm:Minimax} \label{sec:Proof-of-Minimax}}

\subsection{Risk constant \label{subsec:risk const}}

First, we show that the predictive distribution of the random walk
DLM is risk constant. This is done, as mentioned previously, because
we are only concerned with the minimaxity of decision making at time
$t$, for the predictive value, $y_{t+1}$. For decision making at
time $t+1$, we simply add, $-\left(\frac{1}{\sigma_{t+1}}\theta_{0,t+1}^{*}+\frac{1}{\sigma_{t+1}}\left\langle \boldsymbol{\theta}_{t+2}^{*},\boldsymbol{x}_{t+2}\right\rangle \right)$,
and multiply $\frac{1}{\sigma_{t+1}}$ to all past time points. Here,
the change in the transition, $p_{t}\left(\cdot\left|\left\{ y_{s},\boldsymbol{x}_{s}\right\} _{s=1}^{t},\theta_{0,t+1}^{*},\boldsymbol{\theta}_{t+1}^{*},\sigma_{t+1},\boldsymbol{x}_{t+1}\right.\right)$,
at $t$ is expressed as 
\[
p_{t}\left(y_{t+1}\left|\left\{ y_{s},\boldsymbol{x}_{s}\right\} _{s=1}^{t},\theta_{0,t+1}^{*},\boldsymbol{\theta}_{t+1}^{*},\sigma_{t+1},\boldsymbol{x}_{t+1}\right.\right)=p_{t}\left(\tilde{y}_{t+1}\left|\left\{ y_{s},\boldsymbol{x}_{s}\right\} _{s=1}^{t},0,0,1,\boldsymbol{x}_{t+1}\right.\right),
\]
and does not depend on the parameters, $\left(\theta_{0,t+1}^{*},\boldsymbol{\theta}_{t+1}^{*},\sigma_{t+1}\right)$.
Therefore, the KL risk is constant for any set of true parameters,
$\left(\theta_{0,t+1}^{*},\boldsymbol{\theta}_{t+1}^{*},\sigma_{t+1}\right)$:
\begin{alignat*}{1}
R_{\mathsf{KL}}\left(\left(\theta_{0,t+1}^{*},\boldsymbol{\theta}_{t+1}^{*},\sigma_{t+1}\right),\hat{p}_{t}^{\mathsf{m}}\right) & =R_{\mathsf{KL}}\left(\boldsymbol{\theta}_{t+1}^{\prime},v_{t+1},\hat{p}_{t}^{\mathsf{m}}\right)\\
 & =R_{\mathsf{KL}}\left(\widetilde{\boldsymbol{\theta}^{\prime}}_{t+1},\tilde{v}_{t+1},\hat{p}_{t}^{\mathsf{m}}\right)=c.
\end{alignat*}

The KL risk is constant between the transition probability of the
DGP, $$p_{t}\left(y_{t+1}\left|\left\{ y_{s},\boldsymbol{x}_{s}\right\} _{s=1}^{t},\theta_{0,t+1}^{*},\boldsymbol{\theta}_{t+1}^{*},\sigma_{t+1},\boldsymbol{x}_{t+1}\right.\right),$$
and the predictive distribution, $$\hat{p}_{t}^{\frac{1}{v_{0}}\mathsf{m}}\left(y_{t+1}\left|\left\{ y_{s},\boldsymbol{x}_{s}\right\} _{s=1}^{t},\boldsymbol{x}_{t+1}\right.\right).$$

\subsection{Extended Bayes \label{subsec:extend}}

We now consider an extended Bayes strategy given an initial prior,
$\rho_{k}\left(\boldsymbol{\theta}_{0}^{\prime},v_{0}\right)=N\left(0,\sigma_{k}^{2}I\right)\frac{1}{\left|v_{0}\right|^{1+c_{k}}}$,
and show that the Bayes risk limit is 
\[
B\left(\rho_{k},\hat{p}_{t}^{\rho_{k}}\right)\rightarrow B\left(\rho_{k},\hat{p}_{t}^{\frac{1}{v_{0}}\mathsf{m}}\right),\ \left(k\rightarrow\infty\right),
\]
when $\sigma_{k}^{2}\rightarrow\infty$ and $c_{k}\left(>0\right)\rightarrow0$
as $k\rightarrow\infty$.

The difference in KL risk of the above is 
\begin{align*}
 & R_{\mathsf{KL}}\left(\left(\theta_{0,t+1}^{*},\boldsymbol{\theta}_{t+1}^{*},\sigma_{t+1}\right),\hat{p}_{t}^{\frac{1}{v_{0}}\mathsf{m}}\right)-R_{\mathsf{KL}}\left(\left(\theta_{0,t+1}^{*},\boldsymbol{\theta}_{t+1}^{*},\sigma_{t+1}\right),\hat{p}_{t}^{\rho_{n}}\right)\\
= & \int_{Y}\mathbb{E}\left[\log\frac{\hat{p}_{t}^{\frac{1}{v_{0}}\mathsf{m}}\left(y_{t+1}\left|\left\{ y_{s},\boldsymbol{x}_{s}\right\} _{s=1}^{t},\boldsymbol{x}_{t+1}\right.\right)}{\hat{p}_{t}^{\rho_{k}}\left(y_{t+1}\left|\left\{ y_{s},\boldsymbol{x}_{s}\right\} _{s=1}^{t},\boldsymbol{x}_{t+1}\right.\right)}\right]d\mu_{Y}\left(y_{0},\cdots,y_{t}\right)\\
= & \int_{Y}\mathbb{E}\left[\log\frac{\int\int\int\hat{p}_{t}\left(y_{t+1}\left|\boldsymbol{\theta}_{t+1}^{\prime},\boldsymbol{x}_{t+1}\right.\right)\pi^{\frac{1}{v_{0}}\mathsf{m}}\left(\boldsymbol{\theta}_{t+1}^{\prime},v_{t+1}\left|\left\{ y_{s},\boldsymbol{x}_{s}\right\} _{s=1}^{t}\right.\right)d\boldsymbol{\theta}_{t+1}^{\prime}dv_{t+1}}{\int\int\int\hat{p}_{t}\left(y_{t+1}\left|\boldsymbol{\theta}_{t+1}^{\prime},\boldsymbol{x}_{t+1}\right.\right)\pi^{\rho_{k}}\left(\boldsymbol{\theta}_{t+1}^{\prime},v_{t+1}\left|\left\{ y_{s},\boldsymbol{x}_{s}\right\} _{s=1}^{t}\right.\right)d\boldsymbol{\theta}_{t+1}^{\prime}dv_{t+1}}\right]d\mu_{Y}\left(y_{0},\cdots,y_{t}\right).
\end{align*}
The difference between, $\hat{p}_{t}^{\frac{1}{v_{0}}\mathsf{m}}\left(y_{t+1}\left|\left\{ y_{s},\boldsymbol{x}_{s}\right\} _{s=1}^{t},\boldsymbol{x}_{t+1}\right.\right)$
and $\hat{p}_{t}^{\rho_{k}}\left(y_{t+1}\left|\left\{ y_{s},\boldsymbol{x}_{s}\right\} _{s=1}^{t},\boldsymbol{x}_{t+1}\right.\right)$,
is the posterior parameter, $\pi\left(\boldsymbol{\theta}_{t}^{\prime},v_{t}\left|\left\{ y_{s},\boldsymbol{x}_{s}\right\} _{s=1}^{t}\right.\right)$.
The posterior, $\pi\left(\boldsymbol{\theta}_{t}^{\prime},v_{t}\left|\left\{ y_{s},\boldsymbol{x}_{s}\right\} _{s=1}^{t}\right.\right)$,
is given as the direct product of the posterior, $\boldsymbol{\theta}_{t}^{\prime},v_{t}$,
and the posterior, $v_{t}$, is a probability distribution that follows
the random variable, $\left(\prod_{i=1}^{t}\frac{\beta}{\gamma_{i}}\right)\frac{1}{v_{0}}$.

The two conditional probabilities, $\pi^{\frac{1}{v_{0}}\mathsf{m}}\left(\boldsymbol{\theta}_{t}^{\prime},v_{t}\left|\left\{ y_{s},\boldsymbol{x}_{s}\right\} _{s=1}^{t}\right.\right)$
and $\pi^{\rho_{k}}\left(\boldsymbol{\theta}_{t}^{\prime}\left|\left\{ y_{s},\boldsymbol{x}_{s}\right\} _{s=1}^{t}\right.\right)$,
can be obtained via the generalized Bayes formula for filtering \citep{Liptser-Shiryaev_01}.
From this, the posterior distribution is, 
\[
\pi^{\rho}\left(\boldsymbol{\theta}_{t_{n}}^{\prime}\left|\left\{ y_{s},\boldsymbol{x}_{s}\right\} _{s=1}^{t},\left\{ v_{s}\right\} _{s=0}^{t}\right.\right)=\int_{{Y}}\frac{\int_{\left(\mathbb{R}^{J}\right)^{t-1}}\beta_{t}\left(\boldsymbol{\theta}^{\prime},\boldsymbol{v},y\right)d\mu_{\theta}^{\rho}\left(\boldsymbol{\theta}_{0}^{\prime},\cdots,\boldsymbol{\theta}_{t-1}^{\prime}\right)}{\int_{\left(\mathbb{R}^{J}\right)^{t}}\beta_{t}\left(\boldsymbol{\theta}^{\prime},\boldsymbol{v},y\right)d\mu_{\theta}^{\rho}\left(\boldsymbol{\theta}_{0}^{\prime},\cdots,\boldsymbol{\theta}_{t}^{\prime}\right)}d\mu_{{Y}}\left({y}_{1},\cdots,{y}_{t}\right).
\]
Here, $\beta_{n}\left(\boldsymbol{\theta}^{\prime},\boldsymbol{v},y\right)$
is defined as 
\begin{alignat*}{1}
\beta_{n}\left(\boldsymbol{\theta}^{\prime},\boldsymbol{v},y\right) & =\exp\left\{ \sum_{k=1}^{t}\frac{\left\langle \boldsymbol{\theta}_{k}^{\prime},\boldsymbol{x}_{k}^{\prime}\right\rangle y_{k}}{v_{k}}-\frac{1}{2}\sum_{k=1}^{n}\frac{\left(\left\langle \boldsymbol{\theta}_{k}^{\prime},\boldsymbol{x}_{k}^{\prime}\right\rangle \right)^{2}}{v_{k}^{2}}\right\} \\
 & =\prod_{k=1}^{n}\exp\left\{ \frac{\left\langle \boldsymbol{\theta}_{k}^{\prime},\boldsymbol{x}_{k}^{\prime}\right\rangle y_{k}}{v_{k}}-\frac{1}{2}\frac{\left\langle \boldsymbol{\theta}_{k}^{\prime},\boldsymbol{x}_{k}\right\rangle ^{2}}{v_{k}^{2}}\right\} ,
\end{alignat*}
where each $\mu_{\theta}^{\rho},\mu_{{Y}}$ is a cylindrical measure
of processes, $\left\{ \boldsymbol{\theta}_{t}^{\prime}\right\} $
and $\left\{ \boldsymbol{x}_{t}\right\} $, at each $t=t_{k}$, and
the superscript, $\rho$, in $\mu_{\theta}^{\rho}$ specifies the
distribution of $\left(\boldsymbol{\theta}_{0}^{\prime}\right)$ as
$\rho\left(\cdot\right)$.

To evaluate the difference in KL risk, we define the predictive distribution,
\[
\hat{p}_{t}^{\rho}\left(y_{t+1}\left|\left\{ y_{s},\boldsymbol{x}_{s}\right\} _{s=1}^{t},\boldsymbol{x}_{t+1},\left\{ v_{s}\right\} _{s=0}^{t+1}\right.\right),
\]
for each process, $\left\{ \boldsymbol{x}_{t},v_{t}\right\} $, i.e.,
the path-wise predictive distribution, as 
\begin{align*}
 & \hat{p}_{t}^{\rho}\left(y_{t+1}\left|\left\{ y_{s},\boldsymbol{x}_{s}\right\} _{s=1}^{t},\boldsymbol{x}_{t+1},\left\{ v_{s}\right\} _{s=0}^{t+1}\right.\right)\\
= & \int_{\theta_{t+1}}\int_{\theta_{t}}\hat{p}_{t}\left(y_{t+1}\left|\boldsymbol{\theta}_{t+1}^{\prime},v_{t+1},\boldsymbol{x}_{t+1}^{\prime}\right.\right)\pi\left(\boldsymbol{\theta}_{t+1}^{\prime}\left|\boldsymbol{\theta}_{t}^{\prime}\right.\right)\pi^{\rho}\left(\boldsymbol{\theta}_{t_{n}}^{\prime}\left|\left\{ y_{s},\boldsymbol{x}_{s}\right\} _{s=1}^{t},\left\{ v_{s}\right\} _{s=0}^{t}\right.\right)d\boldsymbol{\theta}_{t}^{\prime}d\boldsymbol{\theta}_{t+1}^{\prime}.
\end{align*}
Here, $\pi^{\rho}\left(\boldsymbol{\theta}_{t_{n}}^{\prime}\left|\left\{ y_{s},\boldsymbol{x}_{s}\right\} _{s=1}^{t},\left\{ v_{s}\right\} _{s=0}^{t}\right.\right)$
is 
\[
\pi^{\rho}\left(\boldsymbol{\theta}_{t_{n}}^{\prime}\left|\left\{ y_{s},\boldsymbol{x}_{s}\right\} _{s=1}^{t},\left\{ v_{s}\right\} _{s=0}^{t}\right.\right)=\frac{\int_{\left(\mathbb{R}^{J}\right)^{n-1}}\beta_{n}\left(\boldsymbol{\theta}^{\prime},\boldsymbol{v},y\right)d\mu_{\theta}^{\rho}\left(\left(\boldsymbol{\theta}_{0}^{\prime}\right),\cdots,\left(\boldsymbol{\theta}_{t_{n-1}}^{\prime}\right)\right)}{\int_{\left(\mathbb{R}^{J}\right)^{n}}\beta_{n}\left(\boldsymbol{\theta}^{\prime},\boldsymbol{v},y\right)d\mu_{\theta}^{\rho}\left(\left(\boldsymbol{\theta}_{0}^{\prime}\right),\cdots,\left(\boldsymbol{\theta}_{t_{n}}^{\prime}\right)\right)},
\]
which is the path-wise posterior distribution for each process, $\left\{ \boldsymbol{x}_{t}\right\} $.
Then, $\hat{p}_{t}^{\rho}\left(y_{t+1}\left|\left\{ y_{s},\boldsymbol{x}_{s}\right\} _{s=1}^{t},\boldsymbol{x}_{t+1}\right.\right)$
can be expressed as the following. 
\begin{prop}
\label{prop:2} Consider the initial prior, $\rho\left(\theta_{0}\right)=\mathcal{N}\left(0,W_{0}\right)$.
The path-wise predictive distribution (for each process, $\left\{ \boldsymbol{x}_{t}\right\} $),
$\hat{p}_{t}^{\rho}\left(y_{t+1}\left|\left\{ y_{s},\boldsymbol{x}_{s}\right\} _{s=1}^{t},\boldsymbol{x}_{t+1}\right.\right)$,
can be expressed as, 
\begin{align*}
\hat{p}_{t}^{\rho}\left(y_{t+1}\left|\left\{ y_{s},\boldsymbol{x}_{s}\right\} _{s=1}^{t},\boldsymbol{x}_{t+1}\right.\right) & =\hat{p}_{t}^{\frac{1}{v_{0}}\mathsf{m}}\left(y_{t+1}\left|\left\{ y_{s},\boldsymbol{x}_{s}\right\} _{s=1}^{t},\boldsymbol{x}_{t+1}\right.\right)\\
 & \times\frac{\exp\left\{ -\left(\overline{\theta}_{1}\right)^{\top}\left(\tilde{\varOmega}_{0}\right)^{-1}\left(\overline{\theta}_{1}\right)\right\} \sqrt{\left(2\pi\left|\tilde{\varSigma}_{0}\right|\right)^{d}}\sqrt{\left(2\pi\left|\check{\varOmega}_{1}\right|\right)^{J}}}{\exp\left\{ -\left(\grave{\theta}_{1}\right)^{\top}\left(\check{\varOmega}_{0}\right)^{-1}\left(\grave{\theta}_{1}\right)\right\} \sqrt{\left(2\pi\left|\check{\varSigma}_{0}\right|\right)^{d}}\sqrt{\left(2\pi\left|\tilde{\varOmega}_{1}\right|\right)^{J}}},
\end{align*}
where $\hat{p}_{t}^{\frac{1}{v_{0}}\mathsf{m}}\left(y_{t+1}\left|\left\{ y_{s},\boldsymbol{x}_{s}\right\} _{s=1}^{t},\boldsymbol{x}_{t+1}\right.\right)$
is the predictive distribution when the initial prior is a Lebesgue
measure, and 
\begin{align*}
\tilde{\varSigma}_{0} & =\left(\tilde{\varOmega}_{1}^{-1}+W_{0}^{-1}\right)^{-1}, & \check{\varSigma}_{0} & =\left(\check{\varOmega}_{1}^{-1}+W_{0}^{-1}\right)^{-1}, & \overline{\theta}_{1} & =\left(\boldsymbol{x}_{1}^{\prime}V_{1}^{-1}\boldsymbol{x}_{1}^{\prime\top}+\tilde{\varOmega}_{2}^{-1}\right)^{-1}\left(\boldsymbol{x}_{1}^{\prime}V_{1}^{-1}\boldsymbol{x}_{1}^{\prime\top}\hat{\theta}_{1}+\tilde{\varOmega}_{2}^{-1}\overline{\theta}_{2}\right),\\
\tilde{\varOmega}_{0} & =\tilde{\varOmega}_{1}+W_{0}, & \check{\varOmega}_{0} & =\check{\varOmega}_{1}+W_{0}, & \grave{\theta}_{1} & =\left(\boldsymbol{x}_{1}^{\prime}V_{1}^{-1}\boldsymbol{x}_{1}^{\top}+\check{\varOmega}_{2}^{-1}\right)^{-1}\left(\boldsymbol{x}_{1}^{\prime}V_{1}^{-1}\boldsymbol{x}_{1}^{\prime\top}\hat{\theta}_{1}+\check{\varOmega}_{2}^{-1}\hat{\theta}_{2}\right).
\end{align*}
See Appendix~\ref{App:prop2detail} for details of the iterations. 
\end{prop}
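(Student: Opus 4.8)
The plan is to write both $q_t^{\rho}$ and $q_t^{\mathsf{m}}$ as explicit ratios of Gaussian path integrals and then take their quotient, so that everything common to the two initial priors cancels. First I would recall that the path-wise predictive density is itself a ratio of marginal likelihoods: with $\beta_n(\boldsymbol{\theta}',y)$ as in the generalized Bayes filtering formula and $\mu_{\theta}^{\rho}$ the cylindrical law of $\{\boldsymbol{\theta}_t'\}$ seeded by $\rho(\boldsymbol{\theta}_0')=\mathcal{N}(0,W_0)$ and propagated by the random walk (eq.~\ref{eq:RWstate}),
\[
q_t^{\rho}\!\left(y_{t+\varDelta t}\left|\mathcal{F}_t,\{x_s\}\right.\right)=\frac{\int \beta_n(\boldsymbol{\theta}',y)\,q_t(y_{t+\varDelta t}|\boldsymbol{\theta}_{t+\varDelta t}',\boldsymbol{x}_{t+\varDelta t}')\,\pi(\boldsymbol{\theta}_{t+\varDelta t}'|\boldsymbol{\theta}_{t_n}')\;d\mu_{\theta}^{\rho}}{\int \beta_n(\boldsymbol{\theta}',y)\;d\mu_{\theta}^{\rho}},
\]
where in the numerator $\mu_{\theta}^{\rho}$ ranges over the whole path $(\boldsymbol{\theta}_0',\dots,\boldsymbol{\theta}_{t+\varDelta t}')$ and in the denominator over $(\boldsymbol{\theta}_0',\dots,\boldsymbol{\theta}_{t_n}')$; and identically for $q_t^{\mathsf{m}}$ with $\mu_{\theta}^{\mathsf{m}}$, the law seeded by the flat measure $\mathsf{m}(\cdot)$ on $\boldsymbol{\theta}_0'$. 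Since every factor of $\beta_n$ and of the transitions $\pi(\cdot\,|\cdot)$ is Gaussian-quadratic in $\boldsymbol{\theta}'$, all four integrals (numerator and denominator, for $\rho$ and for $\mathsf{m}$) are Gaussian integrals over the path, and the only difference between the $\rho$-law and the $\mathsf{m}$-law is the Gaussian factor proportional to $\exp(-\tfrac12(\boldsymbol{\theta}_0')^{\top}W_0^{-1}\boldsymbol{\theta}_0')$, together with its normalizing constant, carried by the single variable $\boldsymbol{\theta}_0'$.

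Next I would evaluate each integral by marginalizing the path variables one at a time, from the last index back toward $\boldsymbol{\theta}_0'$: each step fuses the Gaussian message carried back from the future observations with the transition to the previous state and the likelihood there, contributing a $\sqrt{2\pi|\cdot|}$ factor and updating a running conditional mean after completing the square. This is precisely the information-filter / backward-smoothing recursion recorded in Appendix~\ref{App:prop2detail}: the matrices $\tilde{\varOmega}_k$ and the means $\overline{\theta}_k$ govern the numerator integral, which carries the extra observation $y_{t+\varDelta t}$, while $\check{\varOmega}_k$ and $\grave{\theta}_k$ govern the denominator integral, which does not; the updated covariances $\tilde{\varSigma}_0,\check{\varSigma}_0$ appear at the final step, when the message reaching $\boldsymbol{\theta}_1'$ is combined with the prior on $\boldsymbol{\theta}_0'$. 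Because for every index $k\geq1$ the laws $\mu_{\theta}^{\rho}$ and $\mu_{\theta}^{\mathsf{m}}$ coincide, the recursion produces identical factors for the $\rho$- and $\mathsf{m}$-integrals down to the $\boldsymbol{\theta}_0'$-step. At that step, integrating $\boldsymbol{\theta}_0'$ against Lebesgue measure leaves on $\boldsymbol{\theta}_1'$ a message of covariance $\tilde{\varOmega}_1$ (resp.\ $\check{\varOmega}_1$) and contributes no prior normalization, whereas integrating against $\mathcal{N}(0,W_0)$ and completing the square yields the posterior covariance $\tilde{\varSigma}_0=(\tilde{\varOmega}_1^{-1}+W_0^{-1})^{-1}$, the marginal covariance $\tilde{\varOmega}_0=\tilde{\varOmega}_1+W_0$, the determinant factors $\sqrt{(2\pi|\tilde{\varSigma}_0|)^{d}}$ and $\sqrt{(2\pi|\tilde{\varOmega}_1|)^{J}}$, and the exponent $\exp\{-\overline{\theta}_1^{\top}\tilde{\varOmega}_0^{-1}\overline{\theta}_1\}$, together with the analogues $\check{\varSigma}_0,\check{\varOmega}_0,\grave{\theta}_1$ on the denominator side.

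Finally I would take the quotient, $q_t^{\rho}/q_t^{\mathsf{m}}=(N_{\rho}/N_{\mathsf{m}})\big/(D_{\rho}/D_{\mathsf{m}})$. Every factor common to the $\rho$- and $\mathsf{m}$-integrals---those from the likelihoods at $t_1,\dots,t_n$ and at $t+\varDelta t$, from the transitions, and from every $k\geq1$ step of the recursion---cancels inside $N_{\rho}/N_{\mathsf{m}}$ and inside $D_{\rho}/D_{\mathsf{m}}$, and what remains reassembles into $q_t^{\mathsf{m}}(y_{t+\varDelta t}|\mathcal{F}_t,\{x_s\})$ multiplied by the residual $\boldsymbol{\theta}_0'$-step constants, which is exactly the stated correction factor. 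I expect the main obstacle to be the bookkeeping: one must seed the backward recursion so that $\tilde{\varOmega}_k$ and $\check{\varOmega}_k$ differ only through the presence or absence of $y_{t+\varDelta t}$, track every $\sqrt{2\pi|\cdot|}$ factor and every completed-square exponent, and handle the fact that the $\mathsf{m}$-prior path integrals are only conditionally convergent---so they must be interpreted through the likelihood factors at $t_1,\dots$ that render the marginal on $\boldsymbol{\theta}_1'$ integrable---while verifying that the $d$- and $J$-dimensional determinant powers balance after cancellation. Once the recursion of Appendix~\ref{App:prop2detail} is laid out, the identity reduces to a direct comparison of two Gaussian normalizing constants.
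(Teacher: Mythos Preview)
Your proposal is correct and follows essentially the same route as the paper: express $q_t^{\rho}$ and $q_t^{\mathsf{m}}$ as ratios of Gaussian path integrals via the generalized Bayes filtering formula, marginalize the state path backward from the latest index to $\boldsymbol{\theta}_0'$ by repeated completion of the square (yielding exactly the $\tilde{\varOmega}_k,\overline{\theta}_k$ and $\check{\varOmega}_k,\grave{\theta}_k$ recursions), and observe that the only discrepancy between the $\rho$- and $\mathsf{m}$-integrals arises at the final $d\boldsymbol{\theta}_0'$ step, so that the quotient leaves precisely the stated correction factor. The paper's Appendix~\ref{App:prop2} carries out this backward integration explicitly term by term, and your outline matches it in structure and in the identification of which constants survive the cancellation.
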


\begin{proof}
See Appendix~\ref{App:prop2}. 
\end{proof}
From this, we immediately have the following corollary. 
\begin{cor}
\label{cor:2} Consider the initial prior, $\rho_{k}\left(\boldsymbol{\theta}_{0}^{\prime}\right)=N\left(0,\sigma_{k}^{2}I\right)$,
where the variance parameter, $\sigma_{k}$, is $\sigma_{k}\rightarrow\infty$
as $k\rightarrow\infty$. For each process $\left\{ \boldsymbol{x}_{t}\right\} $,
the predictive distribution, $\hat{p}_{t}^{\rho_{k}}\left(y_{t+1}\left|\left\{ y_{s},\boldsymbol{x}_{s}\right\} _{s=1}^{t},\boldsymbol{x}_{t+1}\right.\right)$,
pointwise converges to the predictive distribution, $\hat{p}_{t}^{\mathsf{m}}\left(y_{t+1}\left|\left\{ y_{s},\boldsymbol{x}_{s}\right\} _{s=1}^{t},\boldsymbol{x}_{t+1}\right.\right)$:
\[
\hat{p}_{t}^{\rho_{k}}\left(y_{t+1}\left|\left\{ y_{s},\boldsymbol{x}_{s}\right\} _{s=1}^{t},\boldsymbol{x}_{t+1}\right.\right)\rightarrow\hat{p}_{t}^{\frac{1}{v_{0}}\mathsf{m}}\left(y_{t+1}\left|\left\{ y_{s},\boldsymbol{x}_{s}\right\} _{s=1}^{t},\boldsymbol{x}_{t+1}\right.\right),\ \left(k\rightarrow\infty\right).
\]
\end{cor}

Next, we use the following lemma to show that the scale variable,
$v_{0}$, is extended Bayes. This lemma is an extension of \citet{Liang-Barron_04}. 
\begin{lem}
\label{lem:BRD_bound} Let the two priors for $v_{0}$ be $\rho\left(v_{0}\right)$
and $\varrho\left(v_{0}\right)$, and assume that the posteriors are
proper. Let $u_{s}=f\left(y_{s}\right)$, $s=1,\cdots,t+1$ be the
variable transformed value of the observations, $\left\{ y_{s}\right\} _{s=1}^{t+1}$,
based on a measurable function, $f$. Further assume that the posterior,
$\rho\left(v_{0}\right),\varrho\left(v_{0}\right)$, by the variable
transformed value is also proper. Then, the Bayes risk difference
is bounded by 
\begin{alignat*}{1}
 & B\left(\rho,\hat{p}_{t}^{\rho}\right)-B\left(\rho,\hat{p}_{t}^{\varrho}\right)\\
\leqq & \int_{U}\int_{\left(0,\infty\right]}\int_{\left(0,\infty\right]}\left\{ \log\frac{\rho\left(v_{0}\right)}{\varrho\left(v_{0}\right)}-\log\frac{\rho\left(\tilde{v}_{0}\right)}{\varrho\left(\tilde{v}_{0}\right)}\right\} \rho\left(v_{0}\right)dv_{0}\varrho\left(\left.\tilde{v}_{0}\right|u_{1},\cdots,u_{t}\right)d\tilde{v}_{0}d\mu_{U}\left(u_{1},\cdots,u_{t}\right).
\end{alignat*}
Here, $\tilde{v}_{0}$ is a copy of $v_{0}$ and $\varrho\left(\left.v_{0}\right|y_{1},\cdots,y_{t+1}\right)$
is the posterior distribution of $v_{0}$ obtained from the Kalman
smoother. 
\end{lem}

\begin{proof}
See Appendix~\ref{App:lemma3}. 
\end{proof}
From this lemma, if we approximate the Jeffreys' prior $\frac{1}{\left|v_{0}\right|}$
on the scale parameter, $v_{0}$, using a approximate sequence from
a proper prior, $\rho_{k}\left(v_{0}\right)=\frac{1}{\left|v_{0}\right|^{1+c_{k}}}$,
$\left(c_{k}\left(>0\right)\rightarrow0,\textrm{as }k\rightarrow\infty\right)$,
then we show $B\left(\rho_{k},\hat{p}_{t}^{\frac{1}{v_{0}}\mathsf{m}}\right)-B\left(\rho_{k},\hat{p}_{t}^{\rho_{k}}\right)\rightarrow0$
\citep[Theorem 2.]{Liang-Barron_04}. Thus, we have the following
proposition: 
\begin{prop}
Assume, for all processes, $\left\{ \boldsymbol{x}_{t}\right\} $,
some functional, $\varphi\left(\left\{ \boldsymbol{x}_{t}\right\} \right)$,
exists and satisfies, 
\[
\max\left(\hat{p}_{t}^{\rho_{k}}\left(y_{t+1}\left|\left\{ y_{s},\boldsymbol{x}_{s}\right\} _{s=1}^{t},\boldsymbol{x}_{t+1}\right.\right),\hat{p}_{t}^{\frac{1}{v_{0}}\mathsf{m}}\left(y_{t+1}\left|\left\{ y_{s},\boldsymbol{x}_{s}\right\} _{s=1}^{t},\boldsymbol{x}_{t+1}\right.\right)\right)<\varphi\left(\left\{ \boldsymbol{x}_{t}\right\} \right)<\infty.
\]
For the initial prior, $\rho_{k}\left(\boldsymbol{\theta}_{0}^{\prime},v_{0}\right)=N\left(0,\sigma_{k}^{2}I\right)\frac{1}{\left|v_{0}\right|^{1+c_{k}}}$,
with variance parameter, $\sigma_{k}$, where $\sigma_{k}\rightarrow\infty$
as $k\rightarrow\infty$, the following holds: 
\[
\int_{\mathbb{R}^{\otimes t}}\hat{p}_{t}^{\rho_{k}}\left(y_{t+1}\left|\left\{ y_{s},\boldsymbol{x}_{s}\right\} _{s=1}^{t},\boldsymbol{x}_{t+1}\right.\right)d\mu_{x}\rightarrow\int_{\mathbb{R}^{\otimes t}}\hat{p}_{t}^{\frac{1}{v_{0}}\mathsf{m}}\left(y_{t+1}\left|\left\{ y_{s},\boldsymbol{x}_{s}\right\} _{s=1}^{t},\boldsymbol{x}_{t+1}\right.\right)d\mu_{x},\ \left(k\rightarrow\infty\right).
\]
\end{prop}

\begin{proof}
From the Lebesgue dominant convergence theorem, 
\[
\lim_{k\rightarrow\infty}\int_{\mathbb{R}^{\otimes t}}\hat{p}_{t}^{\rho_{k}}\left(y_{t+1}\left|\left\{ y_{s},\boldsymbol{x}_{s}\right\} _{s=1}^{t},\boldsymbol{x}_{t+1}\right.\right)d\mu_{x}\rightarrow\int_{\mathbb{R}^{\otimes t}}\lim_{k\rightarrow\infty}\hat{p}_{t}^{\rho_{k}}\left(y_{t+1}\left|\left\{ y_{s},\boldsymbol{x}_{s}\right\} _{s=1}^{t},\boldsymbol{x}_{t+1}\right.\right)d\mu_{x},\ \left(k\rightarrow\infty\right),
\]
and Corollary \ref{cor:2}, the proposition holds. 
\end{proof}
From this, we see that the difference in Bayes risk is 
\[
B\left(\rho_{k},\hat{p}_{t}^{\rho_{k}}\right)\rightarrow B\left(\rho_{k},\hat{p}_{t}^{\frac{1}{v_{0}}\mathsf{m}}\right),\ \left(k\rightarrow\infty\right),
\]
thus, $\hat{p}_{t}^{\frac{1}{v_{0}}\mathsf{m}}\left(y_{t+1}\left|\left\{ y_{s},\boldsymbol{x}_{s}\right\} _{s=1}^{t},\boldsymbol{x}_{t+1}\right.\right)$
is extended Bayes.

\subsection{Details for Proposition~\ref{prop:2}\label{App:prop2detail}}

For the equation in Proposition~\ref{prop:2}, we have, 
\begin{align*}
\tilde{\varSigma}_{1} & =\left(\overline{\varSigma}_{1}^{-1}+W_{1}^{-1}\right)^{-1}, & \overline{\varSigma}_{1} & =\left(\boldsymbol{x}_{1}^{\prime}V_{1}^{-1}\boldsymbol{x}_{1}^{\prime\top}+\tilde{\varOmega}_{2}^{-1}\right)^{-1},\\
\tilde{\varOmega}_{1} & =\overline{\varSigma}_{1}+W_{1}, & \overline{\varOmega}_{1} & =\left(\boldsymbol{x}_{1}^{\prime}V_{1}^{-1}\boldsymbol{x}_{1}^{\prime\top}\right)^{-1}+\tilde{\varOmega}_{2},\\
\check{\varSigma}_{1} & =\left(\grave{\varSigma}_{1}^{-1}+W_{1}^{-1}\right)^{-1}, & \grave{\varSigma}_{1} & =\left(\boldsymbol{x}_{1}^{\prime}V_{1}^{-1}\boldsymbol{x}_{1}^{\prime\top}+\check{\varOmega}_{2}^{-1}\right)^{-1},\\
\check{\varOmega}_{1} & =\grave{\varSigma}_{1}+W_{1}, & \grave{\varOmega}_{1} & =\left(\boldsymbol{x}_{1}^{\prime}V_{1}^{-1}\boldsymbol{x}_{1}^{\prime\top}\right)^{-1}+\check{\varOmega}_{2}.
\end{align*}
The iteration of each matrix for $k=1,\cdots,n$ is 
\begin{align*}
\tilde{\varSigma}_{k} & =\left(\overline{\varSigma}_{k}^{-1}+W_{k}^{-1}\right)^{-1}, & \overline{\varSigma}_{k} & =\left(\boldsymbol{x}_{k}^{\prime}V_{k}^{-1}\boldsymbol{x}_{k}^{\prime\top}+\tilde{\varOmega}_{k+1}^{-1}\right)^{-1},\\
\tilde{\varOmega}_{k} & =\overline{\varSigma}_{k}+W_{k}, & \overline{\varOmega}_{k} & =\left(\boldsymbol{x}_{k}^{\prime}V_{k}^{-1}\boldsymbol{x}_{k}^{\prime\top}\right)^{-1}+\tilde{\varOmega}_{k+1},
\end{align*}
\begin{alignat*}{1}
\tilde{\varSigma}_{n+1} & =\left(\boldsymbol{x}_{n+1}^{\prime}V_{n+1}^{-1}\boldsymbol{x}_{n+1}^{\prime\top}+W_{n+1}^{-1}\right)^{-1},\\
\tilde{\varOmega}_{n+1} & =\left(\boldsymbol{x}_{n+1}^{\prime}V_{n+1}^{-1}\boldsymbol{x}_{n+1}^{\prime\top}\right)^{-1}+W_{n+1},
\end{alignat*}
and for $k=1,\cdots,n-1$ is 
\begin{align*}
\check{\varSigma}_{k} & =\left(\grave{\varSigma}_{k}^{-1}+W_{k}^{-1}\right)^{-1}, & \grave{\varSigma}_{k} & =\left(\boldsymbol{x}_{k}^{\prime}V_{k}^{-1}\boldsymbol{x}_{k}^{\prime\top}+\check{\varOmega}_{k+1}^{-1}\right)^{-1},\\
\check{\varOmega}_{k} & =\grave{\varSigma}_{k}+W_{k} & \grave{\varOmega}_{k}, & =\left(\boldsymbol{x}_{k}^{\prime}V_{k}^{-1}\boldsymbol{x}_{k}^{\prime\top}\right)^{-1}+\check{\varOmega}_{k+1},
\end{align*}
\begin{alignat*}{1}
\check{\varSigma}_{n} & =\left(\boldsymbol{x}_{n}^{\prime}V_{n}^{-1}\boldsymbol{x}_{n}^{\prime\top}+W_{n}^{-1}\right)^{-1},\\
\check{\varOmega}_{n} & =\left(\boldsymbol{x}_{n}^{\prime}V_{n}^{-1}\boldsymbol{x}_{n}^{\prime\top}\right)^{-1}+W_{n}.
\end{alignat*}

\subsection{Proof of Proposition \ref{prop:2}\label{App:prop2}}

For the random walk DLM, 
\begin{alignat*}{2}
y_{t} & =\boldsymbol{x}_{t}^{\prime\top}\boldsymbol{\theta}_{t}^{\prime}+v_{t}, &  & v_{t}\sim N\left(0,V_{t}\right),\\
\boldsymbol{\theta}_{t}^{\prime} & =\boldsymbol{\theta}_{t-1}^{\prime}+w_{t}, &  & w_{t}\sim N\left(0,W_{t}\right),
\end{alignat*}
the conditional density function for the observation and state equation
is given as 
\begin{alignat*}{1}
y_{t}\left|\boldsymbol{\theta}_{t}^{\prime}\right. & \sim N\left(\boldsymbol{x}_{t}^{\prime\top}\boldsymbol{\theta}_{t}^{\prime},V_{t}\right)=\frac{1}{\sqrt{2\pi\left|V_{t}\right|}}\exp\left\{ -\left(y_{t}-\boldsymbol{x}_{t}^{\prime\top}\boldsymbol{\theta}_{t}^{\prime}\right)^{\top}V_{t}^{-1}\left(y_{t}-\boldsymbol{x}_{t}^{\prime\top}\boldsymbol{\theta}_{t}^{\prime}\right)\right\} ,\\
\boldsymbol{\theta}_{t}^{\prime}\left|\boldsymbol{\theta}_{t-1}^{\prime}\right. & \sim N\left(\boldsymbol{\theta}_{t-1}^{\prime},W_{t}\right)=\frac{1}{\sqrt{2\pi\left|W_{t}\right|}}\exp\left\{ -\left(\boldsymbol{\theta}_{t}^{\prime}-\boldsymbol{\theta}_{t-1}^{\prime}\right)^{\top}W_{t}^{-1}\left(\boldsymbol{\theta}_{t}^{\prime}-\boldsymbol{\theta}_{t-1}^{\prime}\right)\right\} .
\end{alignat*}
If we set $\hat{\boldsymbol{\theta}}_{n}=\left(\boldsymbol{x}_{t}^{\prime}\boldsymbol{x}_{t}^{\prime\top}\right)^{-1}\boldsymbol{x}_{t}^{\prime}y_{n}$
the conditional density function of the observation equation can be
transformed to 
\begin{alignat*}{1}
 & \frac{1}{\sqrt{2\pi\left|V_{t}\right|}}\exp\left\{ -\left(y_{t}-\boldsymbol{x}_{t}^{\prime\top}\boldsymbol{\theta}_{t}^{\prime}\right)^{\top}V_{t}^{-1}\left(y_{t}-\boldsymbol{x}_{t}^{\prime\top}\boldsymbol{\theta}_{t}^{\prime}\right)\right\} \\
= & \frac{1}{\sqrt{2\pi\left|V_{t}\right|}}\exp\left\{ -\left(y_{t}-\boldsymbol{x}_{t}^{\prime\top}\hat{\boldsymbol{\theta}}_{t}\right)^{\top}V_{t}^{-1}\left(y_{t}-\boldsymbol{x}_{t}^{\prime\top}\hat{\boldsymbol{\theta}}_{t}\right)\right\} \\
 & \times\exp\left\{ -\left(\hat{\boldsymbol{\theta}}_{t}-\boldsymbol{\theta}_{t}^{\prime}\right)^{\top}\boldsymbol{x}_{t}^{\prime}V_{t}^{-1}\boldsymbol{x}_{t}^{\prime\top}\left(\hat{\boldsymbol{\theta}}_{t}-\boldsymbol{\theta}_{t}^{\prime}\right)\right\} .
\end{alignat*}

The predictive distribution, $q_{t}^{\rho}\left(y_{t+1}\left|\left\{ y_{s},\boldsymbol{x}_{s}\right\} _{s=1}^{t},\boldsymbol{x}_{t+1}\right.\right)$,
is given as, 
\begin{equation}
q_{t}^{\rho}\left(y_{t+1}\left|\left\{ y_{s},\boldsymbol{x}_{s}\right\} _{s=1}^{t},\boldsymbol{x}_{t+1}\right.\right)=\frac{\int_{\left(\mathbb{R}^{J}\right)^{t+1}}\beta_{t+1}\left(\boldsymbol{\theta}^{\prime},y\right)d\mu_{\theta}^{\rho}\left(\boldsymbol{\theta}_{0}^{\prime},\cdots,\boldsymbol{\theta}_{t+1}^{\prime}\right)}{\int_{\left(\mathbb{R}^{J}\right)^{t}}\beta_{t}\left(\boldsymbol{\theta}^{\prime},y\right)d\mu_{\theta}^{\rho}\left(\boldsymbol{\theta}_{0}^{\prime},\cdots,\boldsymbol{\theta}_{t}^{\prime}\right)}.\label{eq:Apnd1}
\end{equation}
Here, 
\begin{alignat*}{1}
\beta_{t}\left(\boldsymbol{\theta}^{\prime},y\right) & =\exp\left\{ \sum_{k=1}^{t}\left\langle \boldsymbol{\theta}_{k}^{\prime},\boldsymbol{x}_{k}^{\prime}\right\rangle y_{k}-\frac{1}{2}\sum_{k=1}^{n}\left(\left\langle \boldsymbol{\theta}_{k}^{\prime},\boldsymbol{x}_{k}^{\prime}\right\rangle \right)^{2}\right\} \\
 & =\prod_{k=1}^{n}\exp\left\{ \left\langle \boldsymbol{\theta}_{k}^{\prime},\boldsymbol{x}_{k}^{\prime}\right\rangle y_{k}-\frac{1}{2}\left\langle \boldsymbol{\theta}_{k}^{\prime},\boldsymbol{x}_{k}^{\prime}\right\rangle ^{2}\right\} .
\end{alignat*}
We first rewrite the generalized Bayes formula to the sequential formula.
Given, 
\begin{align*}
g_{t}\left(y_{t}\left|\boldsymbol{\theta}_{t}^{\prime}\right.\right) & =N\left(\boldsymbol{x}_{t}^{\prime\top}\boldsymbol{\theta}_{t}^{\prime},V_{t}\right),\\
P_{t}\left(\boldsymbol{\theta}_{t}^{\prime}\left|\boldsymbol{\theta}_{t-1}^{\prime}\right.\right) & =N\left(\boldsymbol{\theta}_{t-1}^{\prime},W_{t}\right),
\end{align*}
the numerator of the left hand side of eq.~\eqref{eq:Apnd1} can
be written as 
\begin{alignat*}{1}
 & \int\left[\cdots\int\left[g_{t+1}\left(y_{t+1}\left|\boldsymbol{\theta}_{t+1}^{\prime}\right.\right)P_{t+1}\left(\boldsymbol{\theta}_{t+1}^{\prime}\left|\boldsymbol{\theta}_{t}^{\prime}\right.\right)d\boldsymbol{\theta}_{t+1}^{\prime}\right]\right.\\
 & \left.\times g_{t}\left(y_{t}\left|\boldsymbol{\theta}_{t}^{\prime}\right.\right)P_{t}\left(\boldsymbol{\theta}_{t}^{\prime}\left|\boldsymbol{\theta}_{t-1}^{\prime}\right.\right)d\boldsymbol{\theta}_{t}^{\prime}\right]\\
 & \left.\times g_{t-1}\left(y_{t-1}\left|\boldsymbol{\theta}_{t-1}^{\prime}\right.\right)P_{t+1}\left(\boldsymbol{\theta}_{t-1}^{\prime}\left|\boldsymbol{\theta}_{t-2}^{\prime}\right.\right)d\boldsymbol{\theta}_{t-1}^{\prime}\right]\\
 & \vdots\\
 & \left.\times g_{1}\left(y_{1}\left|\boldsymbol{\theta}_{1}^{\prime}\right.\right)P_{t}\left(\boldsymbol{\theta}_{1}^{\prime}\left|\boldsymbol{\theta}_{0}^{\prime}\right.\right)d\boldsymbol{\theta}_{1}^{\prime}\right]\\
 & \left.\times\rho\left(\boldsymbol{\theta}_{0}^{\prime}\right)d\boldsymbol{\theta}_{0}^{\prime}\right].
\end{alignat*}
From the law of iteration of conditional expectations, we integrate
from $t+1$ backwards to sequentially remove the conditionals. Note,
however, that we do not integrate over quantities that are not necessary
for the proof.

First, for $\int g_{t+1}\left(y_{t+1}\left|\boldsymbol{\theta}_{t+1}^{\prime}\right.\right)P_{t+1}\left(\boldsymbol{\theta}_{t+1}^{\prime}\left|\boldsymbol{\theta}_{t}^{\prime}\right.\right)d\boldsymbol{\theta}_{t+1}^{\prime}$,
we have, 
\begin{alignat*}{1}
 & g_{t+1}\left(y_{t+1}\left|\boldsymbol{\theta}_{t+1}^{\prime}\right.\right)P_{t+1}\left(\boldsymbol{\theta}_{t+1}^{\prime}\left|\boldsymbol{\theta}_{t}^{\prime}\right.\right)\\
= & \frac{1}{\sqrt{2\pi\left|V_{t}\right|}}\exp\left\{ -\left(y_{t+1}-\boldsymbol{x}_{t+1}^{\prime\top}\hat{\boldsymbol{\theta}}_{t+1}\right)^{\top}V_{t+1}^{-1}\left(y_{t+1}-\boldsymbol{x}_{t+1}^{\prime\top}\hat{\boldsymbol{\theta}}_{t+1}\right)\right\} \\
 & \times\exp\left\{ -\left(\hat{\boldsymbol{\theta}}_{t+1}-\boldsymbol{\theta}_{t+1}^{\prime}\right)^{\top}\boldsymbol{x}_{t+1}^{\prime}V_{t+1}^{-1}\boldsymbol{x}_{t+1}^{\prime\top}\left(\hat{\boldsymbol{\theta}}_{t+1}-\boldsymbol{\theta}_{t+1}^{\prime}\right)\right\} \\
 & \times\frac{1}{\sqrt{2\pi\left|W_{t+1}\right|}}\exp\left\{ -\left(\boldsymbol{\theta}_{t+1}^{\prime}-\boldsymbol{\theta}_{t}^{\prime}\right)^{\top}W_{t+1}^{-1}\left(\boldsymbol{\theta}_{t+1}^{\prime}-\boldsymbol{\theta}_{t}^{\prime}\right)\right\} .
\end{alignat*}
Completing the square inside the exponent, we have 
\begin{alignat*}{1}
 & \left(\hat{\boldsymbol{\theta}}_{t+1}-\boldsymbol{\theta}_{t+1}^{\prime}\right)^{\top}\boldsymbol{x}_{t+1}^{\prime}V_{t+1}^{-1}\boldsymbol{x}_{t+1}^{\prime\top}\left(\hat{\boldsymbol{\theta}}_{t+1}-\boldsymbol{\theta}_{t+1}^{\prime}\right)+\left(\boldsymbol{\theta}_{t+1}^{\prime}-\boldsymbol{\theta}_{t}^{\prime}\right)^{\top}W_{t+1}^{-1}\left(\boldsymbol{\theta}_{t+1}^{\prime}-\boldsymbol{\theta}_{t}^{\prime}\right)\\
= & \left(\boldsymbol{\theta}_{t+1}^{\prime}-\tilde{\boldsymbol{\theta}}_{t+1}\right)^{\top}\left(\tilde{\varSigma}_{t+1}\right)^{-1}\left(\boldsymbol{\theta}_{t+1}^{\prime}-\tilde{\boldsymbol{\theta}}_{t+1}\right)+\left(\hat{\boldsymbol{\theta}}_{t+1}-\boldsymbol{\theta}_{t}^{\prime}\right)^{\top}\left(\tilde{\varOmega}_{t+1}\right)^{-1}\left(\hat{\boldsymbol{\theta}}_{t+1}-\boldsymbol{\theta}_{t}^{\prime}\right),
\end{alignat*}
where, 
\begin{alignat*}{1}
\tilde{\boldsymbol{\theta}}_{t+1} & =\left(\boldsymbol{x}_{t+1}^{\prime}V_{t+1}^{-1}\boldsymbol{x}_{t+1}^{\prime\top}+W_{t+1}^{-1}\right)^{-1}\left(\boldsymbol{x}_{t+1}^{\prime}V_{n+1}^{-1}\boldsymbol{x}_{t+1}^{\prime\top}\hat{\boldsymbol{\theta}}_{t+1}+W_{t+1}^{-1}\boldsymbol{\theta}_{t}^{\prime}\right),\\
\tilde{\varSigma}_{t+1} & =\left(\boldsymbol{x}_{t+1}^{\prime}V_{t+1}^{-1}\boldsymbol{x}_{t+1}^{\prime\top}+W_{t+1}^{-1}\right)^{-1},\\
\tilde{\varOmega}_{t+1} & =\left(\boldsymbol{x}_{t+1}^{\prime}V_{t+1}^{-1}\boldsymbol{x}_{t+1}^{\prime\top}\right)^{-1}+W_{t+1}.
\end{alignat*}
Since $\exp\left\{ -\left(\hat{\boldsymbol{\theta}}_{t+1}-\boldsymbol{\theta}_{t}^{\prime}\right)^{\top}\left(\tilde{\varOmega}_{t+1}\right)^{-1}\left(\hat{\boldsymbol{\theta}}_{t+1}-\boldsymbol{\theta}_{t}^{\prime}\right)\right\} $
does not affect the integration of $d\boldsymbol{\theta}_{t+1}^{\prime}$,
we move it one step to the past.

Next, we integrate the portion concerning $d\boldsymbol{\theta}_{t}^{\prime}$:
\[
\int\exp\left\{ -\left(\hat{\boldsymbol{\theta}}_{t+1}-\boldsymbol{\theta}_{t}^{\prime}\right)^{\top}\left(\tilde{\varOmega}_{t+1}\right)^{-1}\left(\hat{\boldsymbol{\theta}}_{t+1}-\boldsymbol{\theta}_{t}^{\prime}\right)\right\} g_{t}\left(y_{t}\left|\boldsymbol{\theta}_{t}^{\prime}\right.\right)P_{t}\left(\boldsymbol{\theta}_{t}^{\prime}\left|\boldsymbol{\theta}_{t-1}^{\prime}\right.\right)d\boldsymbol{\theta}_{t}^{\prime}.
\]
The equation inside the exponent is 
\begin{alignat*}{1}
 & \left(\hat{\boldsymbol{\theta}}_{t+1}-\boldsymbol{\theta}_{t}^{\prime}\right)^{\top}\left(\tilde{\varOmega}_{t+1}\right)^{-1}\left(\hat{\boldsymbol{\theta}}_{t+1}-\boldsymbol{\theta}_{t}^{\prime}\right)+\left(y_{t}-\boldsymbol{x}_{t}^{\prime\top}\hat{\boldsymbol{\theta}}_{t}\right)^{\top}V_{t}^{-1}\left(y_{t}-\boldsymbol{x}_{t}^{\prime\top}\hat{\boldsymbol{\theta}}_{t}\right)\\
 & +\left(\hat{\boldsymbol{\theta}}_{t}-\boldsymbol{\theta}_{t}^{\prime}\right)^{\top}\boldsymbol{x}_{t}^{\prime}V_{t}^{-1}\boldsymbol{x}_{t}^{\prime\top}\left(\hat{\boldsymbol{\theta}}_{t}-\boldsymbol{\theta}_{t}^{\prime}\right)+\left(\boldsymbol{\theta}_{t}^{\prime}-\boldsymbol{\theta}_{t-1}^{\prime}\right)^{\top}W_{t}^{-1}\left(\boldsymbol{\theta}_{t}^{\prime}-\boldsymbol{\theta}_{t-1}^{\prime}\right).
\end{alignat*}
Then, we have, 
\begin{alignat*}{1}
 & \left(\hat{\boldsymbol{\theta}}_{t+1}-\boldsymbol{\theta}_{t}^{\prime}\right)^{\top}\left(\tilde{\varOmega}_{t+1}\right)^{-1}\left(\hat{\boldsymbol{\theta}}_{t+1}-\boldsymbol{\theta}_{t}^{\prime}\right)+\left(\hat{\boldsymbol{\theta}}_{t}-\boldsymbol{\theta}_{t}^{\prime}\right)^{\top}\boldsymbol{x}_{t}^{\prime}V_{t}^{-1}\boldsymbol{x}_{t}^{\prime\top}\left(\hat{\boldsymbol{\theta}}_{t}-\boldsymbol{\theta}_{t}^{\prime}\right)\\
= & \left(\boldsymbol{\theta}_{t}^{\prime}-\overline{\boldsymbol{\theta}}_{t}\right)^{\top}\left(\overline{\varSigma}_{t}\right)^{-1}\left(\boldsymbol{\theta}_{t}^{\prime}-\overline{\boldsymbol{\theta}}_{t}\right)+\left(\hat{\boldsymbol{\theta}}_{t+1}-\hat{\boldsymbol{\theta}}_{t}\right)^{\top}\left(\overline{\varOmega}_{t}\right)^{-1}\left(\hat{\boldsymbol{\theta}}_{t+1}-\hat{\boldsymbol{\theta}}_{t}\right),
\end{alignat*}
where, 
\begin{alignat*}{1}
\overline{\boldsymbol{\theta}}_{t} & =\left(\boldsymbol{x}_{t}^{\prime}V_{t}^{-1}\boldsymbol{x}_{t}^{\prime\top}+\tilde{\varOmega}_{t+1}^{-1}\right)^{-1}\left(\boldsymbol{x}_{t}^{\prime}V_{t}^{-1}\boldsymbol{x}_{t}^{\prime\top}\hat{\boldsymbol{\theta}}_{t}+\tilde{\varOmega}_{t+1}^{-1}\hat{\boldsymbol{\theta}}_{t+1}\right),\\
\overline{\varSigma}_{t} & =\left(\boldsymbol{x}_{t}^{\prime}V_{t}^{-1}\boldsymbol{x}_{t}^{\prime\top}+\tilde{\varOmega}_{t+1}^{-1}\right)^{-1},\\
\overline{\varOmega}_{t} & =\left(\boldsymbol{x}_{t}^{\prime}V_{t}^{-1}\boldsymbol{x}_{t}^{\prime\top}\right)^{-1}+\tilde{\varOmega}_{t+1}^{-1}.
\end{alignat*}
Next, we have, 
\begin{alignat*}{1}
 & \left(\boldsymbol{\theta}_{t}^{\prime}-\overline{\boldsymbol{\theta}}_{t}\right)^{\top}\left(\overline{\varSigma}_{t}\right)^{-1}\left(\boldsymbol{\theta}_{t}^{\prime}-\overline{\boldsymbol{\theta}}_{t}\right)+\left(\boldsymbol{\theta}_{t}^{\prime}-\boldsymbol{\theta}_{t-1}^{\prime}\right)^{\top}W_{t}^{-1}\left(\boldsymbol{\theta}_{t}^{\prime}-\boldsymbol{\theta}_{t-1}^{\prime}\right)\\
= & \left(\boldsymbol{\theta}_{t}^{\prime}-\tilde{\boldsymbol{\theta}}_{t}\right)^{\top}\left(\tilde{\varSigma}_{t}\right)^{-1}\left(\boldsymbol{\theta}_{t}^{\prime}-\tilde{\boldsymbol{\theta}}_{t}\right)+\left(\overline{\boldsymbol{\theta}}_{t}-\boldsymbol{\theta}_{t-1}^{\prime}\right)^{\top}\left(\tilde{\varOmega}_{t}\right)^{-1}\left(\overline{\boldsymbol{\theta}}_{t}-\boldsymbol{\theta}_{t-1}^{\prime}\right),
\end{alignat*}
where, 
\begin{alignat*}{1}
\tilde{\boldsymbol{\theta}}_{t} & =\left(\overline{\varSigma}_{t}^{-1}+W_{t}^{-1}\right)^{-1}\left(\overline{\varSigma}_{t}^{-1}\overline{\boldsymbol{\theta}}_{t}+W_{t}^{-1}\boldsymbol{\theta}_{t-1}^{\prime}\right),\\
\tilde{\varSigma}_{t} & =\left(\overline{\varSigma}_{t}^{-1}+W_{t}^{-1}\right)^{-1},\\
\tilde{\varOmega}_{t} & =\overline{\varSigma}_{t}+W_{t}.
\end{alignat*}
Since $\exp\left\{ -\left(\overline{\boldsymbol{\theta}}_{t}-\boldsymbol{\theta}_{t-1}^{\prime}\right)^{\top}\left(\tilde{\varOmega}_{t}\right)^{-1}\left(\overline{\boldsymbol{\theta}}_{t}-\boldsymbol{\theta}_{t-1}^{\prime}\right)\right\} $
does not affect the integration of $d\boldsymbol{\theta}_{t}^{\prime}$,
we move it one step to the past.

Likewise, we sequentially integrate backwards. Then, for the integration
of $d\boldsymbol{\theta}_{0}^{\prime}$, we have, 
\[
\int\exp\left\{ -\left(\overline{\boldsymbol{\theta}}_{1}-\boldsymbol{\theta}_{0}^{\prime}\right)^{\top}\left(\tilde{\varOmega}_{1}\right)^{-1}\left(\overline{\boldsymbol{\theta}}_{1}-\boldsymbol{\theta}_{0}^{\prime}\right)\right\} \rho\left(\boldsymbol{\theta}_{0}^{\prime}\right)d\boldsymbol{\theta}_{0}^{\prime}.
\]
If we take a Lebesgue measure for $\rho$, this integrated value is
$\sqrt{\left(2\pi\left|\tilde{\varOmega}_{1}\right|\right)^{J}}$.
If $\rho=\mathcal{N}\left(0,W_{0}\right)$, then inside the exponent
is 
\begin{alignat*}{1}
 & \left(\overline{\boldsymbol{\theta}}_{1}-\boldsymbol{\theta}_{0}^{\prime}\right)^{\top}\left(\tilde{\varOmega}_{1}\right)^{-1}\left(\overline{\boldsymbol{\theta}}_{1}-\boldsymbol{\theta}_{0}^{\prime}\right)+\left(\boldsymbol{\theta}_{0}^{\prime}\right)^{\top}W_{0}^{-1}\left(\boldsymbol{\theta}_{0}^{\prime}\right)\\
= & \left(\boldsymbol{\theta}_{0}^{\prime}-\tilde{\boldsymbol{\theta}}_{0}\right)^{\top}\left(\tilde{\varSigma}_{0}\right)^{-1}\left(\boldsymbol{\theta}_{0}^{\prime}-\tilde{\boldsymbol{\theta}}_{0}\right)+\left(\overline{\boldsymbol{\theta}}_{1}\right)^{\top}\left(\tilde{\varOmega}_{0}\right)^{-1}\left(\overline{\boldsymbol{\theta}}_{1}\right),
\end{alignat*}
where, 
\begin{alignat*}{1}
\tilde{\boldsymbol{\theta}}_{0} & =\left(\tilde{\varOmega}_{1}^{-1}+W_{0}^{-1}\right)^{-1}\tilde{\varOmega}_{1}^{-1}\overline{\boldsymbol{\theta}}_{1},\\
\tilde{\varSigma}_{0} & =\left(\tilde{\varOmega}_{1}^{-1}+W_{0}^{-1}\right)^{-1},\\
\tilde{\varOmega}_{0} & =\tilde{\varOmega}_{1}+W_{0}.
\end{alignat*}
Therefore, the integrated value is 
\[
\exp\left\{ -\left(\overline{\boldsymbol{\theta}}_{1}\right)^{\top}\left(\tilde{\varOmega}_{0}\right)^{-1}\left(\overline{\boldsymbol{\theta}}_{1}\right)\right\} \sqrt{\left(2\pi\left|\tilde{\varSigma}_{0}\right|\right)^{J}}.
\]
From this, the numerator of eq.~\eqref{eq:Apnd1} can be expressed
as 
\begin{align}
 & \int_{\left(\mathbb{R}^{J}\right)^{t+1}}\beta_{t+1}\left(\boldsymbol{\theta}^{\prime},y\right)d\mu_{\theta}^{\rho}\left(\boldsymbol{\theta}_{0}^{\prime},\cdots,\boldsymbol{\theta}_{t+1}^{\prime}\right)\nonumber \\
= & \int_{\left(\mathbb{R}^{J}\right)^{t+1}}\beta_{t+1}\left(\boldsymbol{\theta}^{\prime},y\right)d\mu_{\theta}^{\mathsf{m}}\left(\boldsymbol{\theta}_{0}^{\prime},\cdots,\boldsymbol{\theta}_{t+1}^{\prime}\right)\times\frac{\exp\left\{ -\left(\overline{\boldsymbol{\theta}}_{1}\right)^{\top}\left(\tilde{\varOmega}_{0}\right)^{-1}\left(\overline{\boldsymbol{\theta}}_{1}\right)\right\} \sqrt{\left(2\pi\left|\tilde{\varSigma}_{0}\right|\right)^{J}}}{\sqrt{\left(2\pi\left|\tilde{\varOmega}_{1}\right|\right)^{J}}}.\label{eq:KFnume}
\end{align}

We now integrate the denominator of eq.~\eqref{eq:Apnd1}. This integration
is not as simple as offsetting the subscript from the results of the
numerator, as the multiple integration of the denominator concerns
one less variable than the integration of the numerator.

For the integration with regard to $d\boldsymbol{\theta}_{t}^{\prime}$,
\[
\int g_{t}\left(y_{t}\left|\boldsymbol{\theta}_{t}^{\prime}\right.\right)P_{t}\left(\boldsymbol{\theta}_{t}^{\prime}\left|\boldsymbol{\theta}_{t-1}^{\prime}\right.\right)d\boldsymbol{\theta}_{t}^{\prime},
\]
the equation inside the exponent is 
\begin{alignat*}{1}
 & \left(y_{t}-\boldsymbol{x}_{t}^{\prime\top}\hat{\boldsymbol{\theta}}_{t}\right)^{\top}V_{t}^{-1}\left(y_{t}-\boldsymbol{x}_{t}^{\prime\top}\hat{\boldsymbol{\theta}}_{t}\right)\\
 & +\left(\hat{\boldsymbol{\theta}}_{t}-\boldsymbol{\theta}_{t}^{\prime}\right)^{\top}\boldsymbol{x}_{t}^{\prime}V_{t}^{-1}\boldsymbol{x}_{t}^{\prime\top}\left(\hat{\boldsymbol{\theta}}_{t}-\boldsymbol{\theta}_{t}^{\prime}\right)+\left(\boldsymbol{\theta}_{t}^{\prime}-\boldsymbol{\theta}_{t-1}^{\prime}\right)^{\top}W_{t}^{-1}\left(\boldsymbol{\theta}_{t}^{\prime}-\boldsymbol{\theta}_{t-1}^{\prime}\right)\\
= & \left(\boldsymbol{\theta}_{t}^{\prime}-\check{\boldsymbol{\theta}}_{t}\right)^{\top}\left(\check{\varSigma}_{t}\right)^{-1}\left(\boldsymbol{\theta}_{t}^{\prime}-\check{\boldsymbol{\theta}}_{t}\right)+\left(\hat{\boldsymbol{\theta}}_{t}-\boldsymbol{\theta}_{t-1}^{\prime}\right)^{\top}\left(\check{\varOmega}_{t}\right)^{-1}\left(\hat{\boldsymbol{\theta}}_{t}-\boldsymbol{\theta}_{t-1}^{\prime}\right),
\end{alignat*}
where, 
\begin{alignat*}{1}
\check{\boldsymbol{\theta}}_{t} & =\left(\boldsymbol{x}_{t}^{\prime}V_{t}^{-1}\boldsymbol{x}_{t}^{\prime\top}+W_{t}^{-1}\right)^{-1}\left(\boldsymbol{x}_{t}^{\prime}V_{t}^{-1}\boldsymbol{x}_{t}^{\prime\top}\hat{\boldsymbol{\theta}}_{t}+W_{t}^{-1}\boldsymbol{\theta}_{t-1}^{\prime}\right),\\
\check{\varSigma}_{t} & =\left(\boldsymbol{x}_{t}^{\prime}V_{t}^{-1}\boldsymbol{x}_{t}^{\prime\top}+W_{t}^{-1}\right)^{-1},\\
\check{\varOmega}_{t} & =\left(\boldsymbol{x}_{t}^{\prime}V_{t}^{-1}\boldsymbol{x}_{t}^{\prime\top}\right)^{-1}+W_{t}.
\end{alignat*}
We move $\exp\left\{ -\left(\hat{\boldsymbol{\theta}}_{t}-\boldsymbol{\theta}_{t-1}^{\prime}\right)^{\top}\left(\check{\varOmega}_{t}\right)^{-1}\left(\hat{\boldsymbol{\theta}}_{t}-\boldsymbol{\theta}_{t-1}^{\prime}\right)\right\} $
back one step to the past.

Next, for the integration concerning $d\boldsymbol{\theta}_{t-1}^{\prime}$,
\[
\int\exp\left\{ -\left(\hat{\boldsymbol{\theta}}_{t}-\boldsymbol{\theta}_{t-1}^{\prime}\right)^{\top}\left(\check{\varOmega}_{t}\right)^{-1}\left(\hat{\boldsymbol{\theta}}_{t}-\boldsymbol{\theta}_{t-1}^{\prime}\right)\right\} g_{t-1}\left(y_{t-1}\left|\boldsymbol{\theta}_{t-1}^{\prime}\right.\right)P_{t-1}\left(\boldsymbol{\theta}_{t-1}^{\prime}\left|\boldsymbol{\theta}_{t-2}^{\prime}\right.\right)d\boldsymbol{\theta}_{t-1}^{\prime},
\]
the equation inside the exponent is 
\begin{alignat*}{1}
 & \left(\hat{\boldsymbol{\theta}}_{t}-\boldsymbol{\theta}_{t-1}^{\prime}\right)^{\top}\left(\check{\varOmega}_{t}\right)^{-1}\left(\hat{\boldsymbol{\theta}}_{t}-\boldsymbol{\theta}_{t-1}^{\prime}\right)+\left(y_{t-1}-\boldsymbol{x}_{t-1}^{\prime\top}\hat{\boldsymbol{\theta}}_{t-1}\right)^{\top}V_{t-1}^{-1}\left(y_{t-1}-\boldsymbol{x}_{t-1}^{\prime\top}\hat{\boldsymbol{\theta}}_{t-1}\right)\\
 & +\left(\hat{\boldsymbol{\theta}}_{t-1}-\boldsymbol{\theta}_{t-1}^{\prime}\right)^{\top}\boldsymbol{x}_{t-1}^{\prime}V_{t-1}^{-1}\boldsymbol{x}_{t-1}^{\prime\top}\left(\hat{\boldsymbol{\theta}}_{t-1}-\boldsymbol{\theta}_{t-1}^{\prime}\right)+\left(\boldsymbol{\theta}_{t-1}^{\prime}-\boldsymbol{\theta}_{t-2}^{\prime}\right)^{\top}W_{t-1}^{-1}\left(\boldsymbol{\theta}_{t-1}^{\prime}-\boldsymbol{\theta}_{t-2}^{\prime}\right).
\end{alignat*}
Then, we have, 
\begin{alignat*}{1}
 & \left(\hat{\boldsymbol{\theta}}_{t}-\boldsymbol{\theta}_{t-1}^{\prime}\right)^{\top}\left(\check{\varOmega}_{t}\right)^{-1}\left(\hat{\boldsymbol{\theta}}_{t}-\boldsymbol{\theta}_{t-1}^{\prime}\right)+\left(\hat{\boldsymbol{\theta}}_{t-1}-\boldsymbol{\theta}_{t-1}^{\prime}\right)^{\top}\boldsymbol{x}_{t-1}^{\prime}V_{t-1}^{-1}\boldsymbol{x}_{t-1}^{\prime\top}\left(\hat{\boldsymbol{\theta}}_{t-1}-\boldsymbol{\theta}_{t-1}^{\prime}\right)\\
= & \left(\boldsymbol{\theta}_{t-1}^{\prime}-\grave{\boldsymbol{\theta}}_{t-1}\right)^{\top}\left(\grave{\varSigma}_{t-1}\right)^{-1}\left(\boldsymbol{\theta}_{t-1}^{\prime}-\grave{\boldsymbol{\theta}}_{t-1}\right)+\left(\hat{\boldsymbol{\theta}}_{t-1}-\hat{\boldsymbol{\theta}}_{t}\right)^{\top}\left(\grave{\varOmega}_{t}\right)^{-1}\left(\hat{\boldsymbol{\theta}}_{t-1}-\hat{\boldsymbol{\theta}}_{t}\right),
\end{alignat*}
where, 
\begin{alignat*}{1}
\grave{\boldsymbol{\theta}}_{t-1} & =\left(\boldsymbol{x}_{t-1}^{\prime}V_{t-1}^{-1}\boldsymbol{x}_{t-1}^{\prime\top}+\check{\varOmega}_{t}^{-1}\right)^{-1}\left(\boldsymbol{x}_{t-1}^{\prime}V_{t-1}^{-1}\boldsymbol{x}_{t-1}^{\prime\top}\hat{\boldsymbol{\theta}}_{t-1}+\check{\varOmega}_{t}^{-1}\hat{\boldsymbol{\theta}}_{t}\right),\\
\grave{\varSigma}_{t-1} & =\left(\boldsymbol{x}_{t-1}^{\prime}V_{t-1}^{-1}\boldsymbol{x}_{t-1}^{\prime\top}+\check{\varOmega}_{t}^{-1}\right)^{-1},\\
\grave{\varOmega}_{t-1} & =\left(\boldsymbol{x}_{t-1}^{\prime}V_{t-1}^{-1}\boldsymbol{x}_{t-1}^{\prime\top}\right)^{-1}+\check{\varOmega}_{t}.
\end{alignat*}
Next, we have, 
\begin{alignat*}{1}
 & \left(\boldsymbol{\theta}_{t-1}^{\prime}-\grave{\boldsymbol{\theta}}_{t-1}\right)^{\top}\left(\grave{\varSigma}_{t-1}\right)^{-1}\left(\boldsymbol{\theta}_{t-1}^{\prime}-\grave{\boldsymbol{\theta}}_{t-1}\right)+\left(\boldsymbol{\theta}_{t-1}^{\prime}-\boldsymbol{\theta}_{t-2}^{\prime}\right)^{\top}W_{t-1}^{-1}\left(\boldsymbol{\theta}_{t-1}^{\prime}-\boldsymbol{\theta}_{t-2}^{\prime}\right)\\
= & \left(\boldsymbol{\theta}_{t-1}^{\prime}-\check{\boldsymbol{\theta}}_{t-1}\right)^{\top}\left(\check{\varSigma}_{t-1}\right)^{-1}\left(\boldsymbol{\theta}_{t-1}^{\prime}-\check{\boldsymbol{\theta}}_{t-1}\right)+\left(\grave{\boldsymbol{\theta}}_{t-1}-\boldsymbol{\theta}_{t-2}^{\prime}\right)^{\top}\check{\varOmega}_{t-1}^{-1}\left(\grave{\boldsymbol{\theta}}_{t-1}-\boldsymbol{\theta}_{t-2}^{\prime}\right),
\end{alignat*}
where, 
\begin{alignat*}{1}
\check{\boldsymbol{\theta}}_{t-1} & =\left(\grave{\varSigma}_{t-1}^{-1}+W_{t-1}^{-1}\right)^{-1}\left(\grave{\varSigma}_{t-1}^{-1}\grave{\boldsymbol{\theta}}_{t-1}+W_{t-1}^{-1}\boldsymbol{\theta}_{t-2}^{\prime}\right),\\
\check{\varSigma}_{t-1} & =\left(\grave{\varSigma}_{t-1}^{-1}+W_{t-1}^{-1}\right)^{-1},\\
\check{\varOmega}_{t-1} & =\grave{\varSigma}_{t-1}+W_{t-1}.
\end{alignat*}
We move $\exp\left\{ -\left(\grave{\boldsymbol{\theta}}_{t-1}-\boldsymbol{\theta}_{t-2}^{\prime}\right)^{\top}\check{\varOmega}_{t-1}^{-1}\left(\grave{\boldsymbol{\theta}}_{t-1}-\boldsymbol{\theta}_{t-2}^{\prime}\right)\right\} $
back one step to the past.

Likewise, we sequentially integrate backwards. Then, for the integration
of $d\boldsymbol{\theta}_{0}^{\prime}$, we have, 
\[
\int\exp\left\{ -\left(\grave{\boldsymbol{\theta}}_{1}-\boldsymbol{\theta}_{0}^{\prime}\right)^{\top}\left(\check{\varOmega}_{1}\right)^{-1}\left(\grave{\boldsymbol{\theta}}_{1}-\boldsymbol{\theta}_{0}^{\prime}\right)\right\} \rho\left(\boldsymbol{\theta}_{0}^{\prime}\right)d\boldsymbol{\theta}_{0}^{\prime}.
\]
If we take a Lebesgue measure for $\rho$, the integrated value is
$\sqrt{\left(2\pi\left|\check{\varOmega}_{1}\right|\right)^{J}}$.
If $\rho=\mathcal{N}\left(0,W_{0}\right)$, then the inside of the
exponent is 
\begin{alignat*}{1}
 & \left(\grave{\boldsymbol{\theta}}_{1}-\boldsymbol{\theta}_{0}^{\prime}\right)^{\top}\left(\check{\varOmega}_{1}\right)^{-1}\left(\grave{\boldsymbol{\theta}}_{1}-\boldsymbol{\theta}_{0}^{\prime}\right)+\left(\boldsymbol{\theta}_{0}^{\prime}\right)^{\top}W_{0}^{-1}\left(\boldsymbol{\theta}_{0}^{\prime}\right)\\
= & \left(\boldsymbol{\theta}_{0}^{\prime}-\check{\boldsymbol{\theta}}_{0}\right)^{\top}\left(\check{\varSigma}_{0}\right)^{-1}\left(\boldsymbol{\theta}_{0}^{\prime}-\check{\boldsymbol{\theta}}_{0}\right)+\left(\grave{\boldsymbol{\theta}}_{1}\right)^{\top}\left(\check{\varOmega}_{0}\right)^{-1}\left(\grave{\boldsymbol{\theta}}_{1}\right),
\end{alignat*}
where, 
\begin{alignat*}{1}
\check{\boldsymbol{\theta}}_{0} & =\left(\check{\varOmega}_{1}^{-1}+W_{0}^{-1}\right)^{-1}\check{\varOmega}_{1}^{-1}\grave{\boldsymbol{\theta}}_{1},\\
\check{\varSigma}_{0} & =\left(\check{\varOmega}_{1}^{-1}+W_{0}^{-1}\right)^{-1},\\
\check{\varOmega}_{0} & =\check{\varOmega}_{1}+W_{0}.
\end{alignat*}
Therefore, the integrated value is 
\[
\exp\left\{ -\left(\grave{\boldsymbol{\theta}}_{1}\right)^{\top}\left(\check{\varOmega}_{0}\right)^{-1}\left(\grave{\boldsymbol{\theta}}_{1}\right)\right\} \sqrt{\left(2\pi\left|\check{\varSigma}_{0}\right|\right)^{J}}.
\]
Therefore, the denominator of eq.~\eqref{eq:Apnd1} can be expressed
as 
\begin{align}
 & \int_{\left(\mathbb{R}^{J}\right)^{t}}\beta_{t}\left(\boldsymbol{\theta}^{\prime},y\right)d\mu_{\theta}^{\rho}\left(\boldsymbol{\theta}_{0}^{\prime},\cdots,\boldsymbol{\theta}_{t}^{\prime}\right)\nonumber \\
= & \int_{\left(\mathbb{R}^{J}\right)^{t}}\beta_{t}\left(\boldsymbol{\theta}^{\prime},y\right)d\mu_{\theta}^{\mathsf{m}}\left(\boldsymbol{\theta}_{0}^{\prime},\cdots,\boldsymbol{\theta}_{t+1}^{\prime}\right)\times\frac{\exp\left\{ -\left(\grave{\boldsymbol{\theta}}_{1}\right)^{\top}\left(\check{\varOmega}_{0}\right)^{-1}\left(\grave{\boldsymbol{\theta}}_{1}\right)\right\} \sqrt{\left(2\pi\left|\check{\varSigma}_{0}\right|\right)^{J}}}{\sqrt{\left(2\pi\left|\check{\varOmega}_{1}\right|\right)^{J}}}.\label{eq:KFdenom}
\end{align}

Finally, from eq.~\eqref{eq:KFnume} and eq.~\eqref{eq:KFdenom},
the corollary holds. Q.E.D.

\subsection{Proof of Lemma \ref{lem:BRD_bound}\label{App:lemma3}}

For the DLM model, eqs.~\eqref{eq:DLMa}, \eqref{eq:DLMb}, and \eqref{eq:DLMc},
set the two priors on $v_{0}$ as $\rho\left(v_{0}\right),\varrho\left(v_{0}\right)$
and denote the cylindrical measure (joint pdf) obtained from those
priors, $\left\{ y_{s}\right\} _{s=1}^{t}$, as 
\begin{alignat*}{1}
\mu_{Y}^{\rho}\left(y_{1},\cdots,y_{t}\right) & \triangleq\int_{\left(0,\infty\right]}\mu_{Y}\left(\left.y_{1},\cdots,y_{t}\right|v_{0}\right)\rho\left(v_{0}\right)dv_{0}\\
\mu_{Y}^{\varrho}\left(y_{1},\cdots,y_{t}\right) & \triangleq\int_{\left(0,\infty\right]}\mu_{Y}\left(\left.y_{1},\cdots,y_{t}\right|v_{0}\right)\varrho\left(v_{0}\right)dv_{0}.
\end{alignat*}
Then, the Bayes risk difference is written as 
\begin{alignat*}{1}
 & B\left(\rho,\hat{p}_{t}^{\rho}\right)-B\left(\rho,\hat{p}_{t}^{\varrho}\right)\\
= & \int_{Y}\log\frac{\mu_{Y}^{\rho}\left(y_{1},\cdots,y_{t+1}\right)}{\mu_{Y}^{\varrho}\left(y_{1},\cdots,y_{t+1}\right)}d\mu_{Y}^{\rho}\left(y_{1},\cdots,y_{t+1}\right)\\
 & -\int_{Y}\log\frac{\mu_{Y}^{\rho}\left(y_{1},\cdots,y_{t}\right)}{\mu_{Y}^{\varrho}\left(y_{1},\cdots,y_{t}\right)}d\mu_{Y}^{\rho}\left(y_{1},\cdots,y_{t}\right).
\end{alignat*}
The second term in the RHS is, from the data processing inequality,
\[
\int_{Y}\log\frac{\mu_{Y}^{\rho}\left(y_{1},\cdots,y_{t}\right)}{\mu_{Y}^{\varrho}\left(y_{1},\cdots,y_{t}\right)}d\mu_{Y}^{\rho}\left(y_{1},\cdots,y_{t}\right)\geqq\int_{U}\log\frac{\mu_{U}^{\rho}\left(u_{1},\cdots,u_{t}\right)}{\mu_{U}^{\varrho}\left(u_{1},\cdots,u_{t}\right)}d\mu_{U}^{\rho}\left(u_{1},\cdots,u_{t}\right).
\]
Here, we have $\mu_{U}^{\rho}\left(u_{1},\cdots,u_{t}\right)=\mu_{Y}^{\rho}\left(f\left(y_{1}\right),\cdots,f\left(y_{t}\right)\right)$.
This is because $f$ is a measurable function, where there exists
a Markov kernel, $\left(y_{1},\cdots,y_{t}\right)\rightarrow\left(u_{1},\cdots,u_{t}\right)$,
which, using $\kappa\left(\mathbf{u}\left|\mathbf{y}\right.\right)$,
can be written as 
\[
\mu_{U}^{\rho}\left(u_{1},\cdots,u_{t}\right)=\int\cdots\int\kappa\left(\mathbf{u}\left|\mathbf{y}\right.\right)\mu_{Y}^{\rho}\left(y_{1},\cdots,y_{t}\right)dy_{1}dy_{2}\cdots dy_{t}.
\]
Then, 
\begin{alignat*}{1}
 & \int_{Y}\log\frac{\mu_{Y}^{\rho}\left(y_{1},\cdots,y_{t}\right)}{\mu_{Y}^{\varrho}\left(y_{1},\cdots,y_{t}\right)}d\mu_{Y}^{\rho}\left(y_{1},\cdots,y_{t}\right)\\
= & \int_{U}\int_{Y}\log\frac{\kappa\left(\mathbf{u}\left|\mathbf{y}\right.\right)\mu_{Y}^{\rho}\left(y_{1},\cdots,y_{t}\right)}{\kappa\left(\mathbf{u}\left|\mathbf{y}\right.\right)\mu_{Y}^{\varrho}\left(y_{1},\cdots,y_{t}\right)}\kappa\left(\mathbf{u}\left|\mathbf{y}\right.\right)d\mu_{Y}^{\rho}\left(y_{1},\cdots,y_{t}\right)\\
\geqq & \int_{U}\log\frac{\int_{Y}\kappa\left(\mathbf{u}\left|\mathbf{y}\right.\right)d\mu_{Y}^{\rho}\left(y_{1},\cdots,y_{t}\right)}{\int_{Y}\kappa\left(\mathbf{u}\left|\mathbf{y}\right.\right)d\mu_{Y}^{\varrho}\left(y_{1},\cdots,y_{t}\right)}\int_{Y}\kappa\left(\mathbf{u}\left|\mathbf{y}\right.\right)d\mu_{Y}^{\rho}\left(y_{1},\cdots,y_{t}\right)\\
= & \int_{U}\log\frac{\mu_{U}^{\rho}\left(u_{1},\cdots,u_{t}\right)}{\mu_{U}^{\varrho}\left(u_{1},\cdots,u_{t}\right)}d\mu_{U}^{\rho}\left(u_{1},\cdots,u_{t}\right)
\end{alignat*}
where the inequality is from the log-sum inequality is applied to
Lebesgue integrals: when a non-negative sequence, $\left\{ a_{i}\right\} _{i=1}^{\infty},\left\{ b_{i}\right\} _{i=1}^{\infty}$,
satisfies $\sum_{i=1}^{\infty}a_{i}<\infty,\sum_{i=1}^{\infty}b_{i}<\infty$,
\[
\sum_{i=1}^{\infty}\left(a_{i}\log\frac{a_{i}}{b_{i}}\right)\geqq\left(\sum_{i=1}^{\infty}a_{i}\right)\log\frac{\sum_{i=1}^{\infty}a_{i}}{\sum_{i=1}^{\infty}b_{i}}
\]
holds. Therefore, 
\begin{alignat}{1}
B\left(\rho,\hat{p}_{t}^{\rho}\right)-B\left(\rho,\hat{p}_{t}^{\varrho}\right) & \leqq\int_{Y}\log\frac{\mu_{Y}^{\rho}\left(y_{1},\cdots,y_{t+1}\right)}{\mu_{Y}^{\varrho}\left(y_{1},\cdots,y_{t+1}\right)}d\mu_{Y}^{\rho}\left(y_{1},\cdots,y_{t+1}\right)-\int_{U}\log\frac{\mu_{U}^{\rho}\left(u_{1},\cdots,u_{t}\right)}{\mu_{U}^{\varrho}\left(u_{1},\cdots,u_{t}\right)}d\mu_{U}^{\rho}\left(u_{1},\cdots,u_{t}\right).\label{eq:BayesRiskBound}
\end{alignat}

Next, for the first term in the RHS in eq.~\eqref{eq:BayesRiskBound},
from Jensen's inequality, 
\begin{alignat*}{1}
-\log\frac{\int_{\left(0,\infty\right]}\mu_{Y}\left(\left.y_{1},\cdots,y_{t+1}\right|v_{0}\right)\frac{\varrho\left(v_{0}\right)}{\rho\left(v_{0}\right)}\rho\left(v_{0}\right)dv_{0}}{\int_{\left(0,\infty\right]}\mu_{Y}\left(\left.y_{1},\cdots,y_{t+1}\right|v_{0}\right)\rho\left(v_{0}\right)dv_{0}} & =-\log\int_{\left(0,\infty\right]}\frac{\varrho\left(v_{0}\right)}{\rho\left(v_{0}\right)}\rho\left(\left.v_{0}\right|y_{1},\cdots,y_{t+1}\right)dv_{0}\\
 & \leqq-\int_{\left(0,\infty\right]}\log\left(\frac{\varrho\left(v_{0}\right)}{\rho\left(v_{0}\right)}\right)\rho\left(\left.v_{0}\right|y_{1},\cdots,y_{t+1}\right)dv_{0}.
\end{alignat*}
Therefore, 
\begin{alignat*}{1}
\int_{Y}\log\frac{\mu_{Y}^{\rho}\left(y_{1},\cdots,y_{t+1}\right)}{\mu_{Y}^{\varrho}\left(y_{1},\cdots,y_{t+1}\right)}d\mu_{Y}^{\rho}\left(y_{1},\cdots,y_{t+1}\right) & \geqq\int_{Y}\int_{\left(0,\infty\right]}\log\left(\frac{\rho\left(v_{0}\right)}{\varrho\left(v_{0}\right)}\right)\rho\left(\left.v_{0}\right|y_{1},\cdots,y_{t+1}\right)dv_{0}d\mu_{Y}^{\rho}\left(y_{1},\cdots,y_{t+1}\right)\\
 & =\int_{\left(0,\infty\right]}\log\left(\frac{\rho\left(v_{0}\right)}{\varrho\left(v_{0}\right)}\right)\rho\left(v_{0}\right)dv_{0}.
\end{alignat*}
Similarly for the second term in the RHS of eq.~\eqref{eq:BayesRiskBound},
we have, 
\begin{alignat*}{1}
\log\frac{\int_{\left(0,\infty\right]}\mu_{U}^{\rho}\left(\left.u_{1},\cdots,u_{t}\right|\tilde{v}_{0}\right)\frac{\rho\left(\tilde{v}_{0}\right)}{\varrho\left(\tilde{v}_{0}\right)}\varrho\left(\tilde{v}_{0}\right)d\tilde{v}_{0}}{\int_{\left(0,\infty\right]}\mu_{U}^{\varrho}\left(\left.u_{1},\cdots,u_{t}\right|\tilde{v}_{0}\right)\varrho\left(\tilde{v}_{0}\right)d\tilde{v}_{0}} & =\log\int_{\left(0,\infty\right]}\frac{\rho\left(\tilde{v}_{0}\right)}{\varrho\left(\tilde{v}_{0}\right)}\varrho\left(\left.\tilde{v}_{0}\right|u_{1},\cdots,u_{t}\right)d\tilde{v}_{0}\\
 & \geqq\int_{\left(0,\infty\right]}\log\left(\frac{\rho\left(\tilde{v}_{0}\right)}{\varrho\left(\tilde{v}_{0}\right)}\right)\varrho\left(\left.\tilde{v}_{0}\right|u_{1},\cdots,u_{t}\right)d\tilde{v}_{0}
\end{alignat*}
from which we have 
\[
\int_{U}\log\frac{\mu_{U}^{\rho}\left(u_{1},\cdots,u_{t}\right)}{\mu_{U}^{\varrho}\left(u_{1},\cdots,u_{t}\right)}d\mu_{U}^{\rho}\left(u_{1},\cdots,u_{t}\right)\geqq\int_{U}\int_{\left(0,\infty\right]}\log\left(\frac{\rho\left(\tilde{v}_{0}\right)}{\varrho\left(\tilde{v}_{0}\right)}\right)\varrho\left(\left.\tilde{v}_{0}\right|u_{1},\cdots,u_{t}\right)d\tilde{v}_{0}d\mu_{U}^{\rho}\left(u_{1},\cdots,u_{t}\right).
\]
Therefore, the RHS of eq.~\eqref{eq:BayesRiskBound} is bounded by
\[
\int_{U}\int_{\left(0,\infty\right]}\int_{\left(0,\infty\right]}\left\{ \log\frac{\rho\left(v_{0}\right)}{\varrho\left(v_{0}\right)}-\log\frac{\rho\left(\tilde{v}_{0}\right)}{\varrho\left(\tilde{v}_{0}\right)}\right\} \rho\left(v_{0}\right)dv_{0}\varrho\left(\left.\tilde{v}_{0}\right|u_{1},\cdots,u_{t}\right)d\tilde{v}_{0}d\mu_{U}\left(u_{1},\cdots,u_{t}\right)
\]
from above.

\section{Simulation study\label{sec:sim}}

To exemplify the theoretical results, we will use a simple, yet pertinent,
simulation study. Specifically, the simulation study is designed to
induce bias and dependence amongst agent forecasts, a setting that
the theoretical results highlight as the source of performative gains
in BPS. This setting is also realistic, in the sense that these characteristics
are observed in real world situations. The simulation study is also
relatively small sample (300 at most), which exemplifies the finite
sample properties and mirrors sample size seen in practice. To measure
predictive performances, we utilize the mean squared forecast error
(MSFE) and log predictive density ratio (LPDR) and compare dynamic
BPS against other ensemble methods.

\subsection{Simulation set up}

We construct a simulation study that captures the characteristics
encountered in real empirical applications; namely dependence amongst
agents and misspecification. First, the data generating process for
the target, $y_{t}$, is generated as follows: 
\begin{align*}
y_{t} & =\theta_{0,t}+\sum_{i=1}^{4}\theta_{ti}x_{ti}+\textrm{exp}(g_{t}/2)\nu,\quad\nu\sim N(0,1),\\
g_{t} & =g_{t-1}+\eta,\quad\eta\sim N(0,\sigma^{2}),\\
\theta_{0,t} & =\theta_{0,t-1}+\omega_{\theta_{0}},\quad\omega_{\theta_{0}}\sim N(0,\sigma^{2}),\\
\btheta_{t} & =\btheta_{t-1}+\bomega_{\btheta},\quad\bomega_{\btheta}\sim N(0,\sigma^{2}I),
\end{align*}
where the time varying parameters follow a random walk and the observation
error has stochastic volatility. We initialize $\btheta_{0}=1$ and
$\theta_{0}=0$ and discard the first 50 samples to allow random
starting points, and initialize $\textrm{exp}(g_{0}/2)=0.1$. The
covariates $x_{1{:}4}$ are generated as i.i.d. samples from $N(0,\sigma^{2})$.
For the simulation, $\sigma=0.01$.

At all points, only $\{y,x_{1{:}3}\}$ are observable by the agents
and $x_{4}$ is omitted, making all models misspecified. We construct
two agents, $\{\mA_{1},\mA_{2}\}$, with each only observing either
$\{x_{1},x_{3}\}$ or $\{x_{2},x_{3}\}$, thus allowing for dependencies.
Both agents forecast $y_{t}$ using a standard conjugate random walk
DLM ~\citep[Section 4,][]{WestHarrison1997book2}. Prior specifications
for the DLM state vector and discount volatility model in each of
the two agent models is based on $\btheta_{0}|v_{0}\sim N(\zero,(v_{0}/s_{0})\I)$
and $1/v_{0}\sim G(n_{0}/2,n_{0}s_{0}/2)$ with $n_{0}=2,s_{0}=0.01$.
Discount factors are set to $(\beta,\delta)=(0.99,0.95)$.

The BPS synthesis function follows eq.~\eqref{DLM}, with priors
$\theta_{0}|v_{0}\sim N(0,v_{0}/s_{0})$, $\btheta_{0}|v_{0}\sim N(\m_{0},(v_{0}/s_{0})\sigma^{2})$
with $\m_{0}=(0,\one'/J)'$ and $1/v_{0}\sim G(n_{0}/2,n_{0}s_{0}/2)$
with $n_{0}=10,s_{0}=0.002$. The discount factors are set to $(\beta,\delta)=(0.95,0.99)$.
Priors and discount factors are identical to \citet{mcalinn2019dynamic}.

For the study, we generate 350 samples from the data generating process.
The first 50 are used to sequentially estimate the agent models, with
the latter 25 of the 50 are used to simultaneously calibrate the BPS
synthesis function, as well as the other ensemble methods. Forecasts
are done sequentially over the remaining 300 samples, where agent
models, BPS, and the other ensemble methods are recalibrated for each
$t$ after observing new data and forecasts. We compare the mean squared
forecast error (MSFE) and the log predictive density ratio (LPDR),
evaluated at $t=\{100,200,300\}$. The log predictive density ratios
(LPDR) for each $t$ is 
\begin{align*}
\mathrm{LPDR}_{\seq1t}=\sum_{i=\seq1t}\mathrm{log}\{p_{*}(y_{t+1}|y_{1{:}t})/p_{\mathrm{BPS}}(y_{t+1}|y_{1{:}t})\}
\end{align*}
where $p_{*}(y_{t+1}|y_{1{:}t})$ is the predictive density of the
model being compared with. Comparing both the MSFE and LPDR provides
a more holistic assessment of the predictive performance. We repeat
the simulation 100 times and report the average.

\subsection{Simulation results}

Comparing MSFE and LPDR (Table~\ref{table:mse}), we find significant
improvements in favor of BPS, with BPS cutting down the MSFE by at
least half compared to the competing strategies. Additionally, the
improvements increase with $t$, showing how BPS dynamically adapts
to the data, accumulating improvements over time. The results are
consistent with the LPDR, also improving as $t$ increases.

\begin{table}[t!]
\centering \caption{Predictive evaluation using mean squared forecast error (MSFE) and
log predictive density ratio (LPDR) for equal weight averaging (EW),
Bayesian model averaging (BMA), and Bayesian predictive synthesis
(BPS), averaged over the 100 simulations. MSFE and LPDR are evaluated
at $t=\{100,200,300\}$. For MSFE, we report the ratio, MSFE$_{*}$/MSFE$_{BPS}$,
where {*} denotes the method compared against.}
\begin{tabular}{rrrrr}
\multicolumn{1}{l}{} &  & EW  & BMA  & BPS \tabularnewline
\hline 
\multicolumn{1}{r}{$t=100$} & MSFE$_{1:t}$  & 1.4738  & 1.4421  & 1.0000 \tabularnewline
 & \multicolumn{1}{r}{LPDR$_{1:t}$} & -16.92  & -16.21  & -- \tabularnewline
\multicolumn{1}{r}{$t=200$} & MSFE$_{1:t}$  & 1.5774  & 1.5584  & 1.0000 \tabularnewline
 & \multicolumn{1}{r}{LPDR$_{1:t}$} & -39.11  & -37.66  & -- \tabularnewline
\multicolumn{1}{r}{$t=300$} & MSFE$_{1:t}$  & 1.7550  & 1.7427  & 1.0000 \tabularnewline
 & \multicolumn{1}{r}{LPDR$_{1:t}$} & -69.53  & -67.30  & -- \tabularnewline
\end{tabular}\label{table:mse} 
\end{table}

The simulation results confirm and highlight the results from the
theoretical analysis in Section~\ref{sec:minmax}: dynamic BPS improves
forecasts over linear combinations. 
\end{document}